\numberwithin{equation}{section}
\newtheorem{lemma}{Lemma}[section]
\newtheorem{theorem}{Theorem}[section]
\theoremstyle{definition}
\providecommand{\customgenericname}{}
\newcommand{\newcustomtheorem}[2]{%
	\newenvironment{#1}[1]
	{%
		\renewcommand\customgenericname{#2}%
		\renewcommand\theinnercustomgeneric{##1}%
		\innercustomgeneric
	}
	{\endinnercustomgeneric}
}
\DeclareMathOperator{\D}{\mathbb{D}}
\DeclareMathOperator{\N}{\mathbb{N}}
\DeclareMathOperator{\C}{\mathbb{C}}
\DeclareMathOperator{\R}{\mathbb{R}}
\DeclareMathOperator{\Ha}{\mathbb{H}}
\DeclareMathOperator{\dist}{\mathrm{dist}}
\begin{document}
\title[Monotonicity of the speeds of semigroups]{On the monotonicity of the speeds for semigroups of holomorphic self-maps of the unit disk}

\author{Dimitrios Betsakos}  
\address{Department of Mathematics, Aristotle University of Thessaloniki, 54124, Thessaloniki, Greece}
\email{betsakos@math.auth.gr}  

\author{Nikolaos Karamanlis}
\address{Department of Mathematics and Applied Mathematics, University of Crete, Heraklion 70013, Crete, Greece}
\email{karamanlisn@gmail.com} 
\thanks{The second author is supported by the Hellenic Foundation for Research and Innovation, Project HFRI-FM17-1733.}

\subjclass[2010]{Primary  37F44; Secondary 30C85, 51M10}

\keywords{Semigroup of holomorphic functions, domain convex in the positive direction, harmonic measure, hyperbolic distance, hyperbolic geodesic}


\begin{abstract} 
We study semigroups $(\phi_t)_{t\geq 0}$ of holomorphic self-maps  of the unit disk with Denjoy-Wolff point on the boundary. We show that the orthogonal speed of such semigroups is a strictly increasing function. This answers a question raised by F. Bracci, D. Cordella, and M. Kourou, and implies a domain monotonicity property for orthogonal speeds conjectured by Bracci. We  give an example of a semigroup such that its total speed is not eventually increasing. We also provide another example of a semigroup having total speed of  a certain asymptotic behavior, thus answering another question of Bracci.
\end{abstract}

\maketitle

\section{Introduction}\label{intro}
A one parameter family $(\phi_t)_{t\geq 0}$ of holomorphic self-maps of the unit disk $\mathbb D$ is called a semigroup when \\
(a) $\phi_0$ is the identity;\\
(b) for all $t,s\geq 0$, $\phi_{t+s}=\phi_t\circ \phi_s$;\\
(c) for every $s\geq 0$ and every $z\in \mathbb D$, $\lim_{t\to s}\phi_t(z)=\phi_s(z)$.\\
For a comprehensive presentation of the rich theory of semigroups, we refer to the recent treatise \cite{BCD}. Here, we review only the basic properties we will need later.

\medskip

We restrict ourselves to {\it non-elliptic} semigroups, namely semigroups such that for every $t>0$, $\phi_t$ has no fixed points in $\mathbb D$. Then there exists a point $\tau\in \partial \mathbb D$, the {\it Denjoy-Wolff point} of the semigroup, such that for every $z\in\mathbb D$,
\begin{equation}\label{i1}
\lim_{t\to\infty}\phi_t(z)=\tau.
\end{equation} 
An important representation of a semigroup $(\phi_t)_{t\geq 0}$ is the following: There exists a unique conformal mapping $h:\mathbb D\to\mathbb C$ with $h(0)=0$ and 
\begin{equation}\label{i2}
\phi_t(z)=h^{-1}(h(z)+t),\;\;\;\;\;z\in \mathbb D,\;t\geq 0.
\end{equation} 
The function $h$ is the {\it Koenigs function} and its image $\Omega=h(\mathbb D)$ is the {\it Koenigs domain} of the semigroup. The Koenigs domain is {\it convex in the positive direction} (or {\it starlike at infinity}); that is, it has the property that if $z\in\Omega$, then the half-line $\{z+t:t\geq 0\}$ is contained in $\Omega$. 

\medskip

Conversely, suppose that $\Omega\neq \C$ is a domain which is convex in the positive direction and contains the origin. Then the positive real semiaxis is a slit in 
$\Omega$ landing at $\infty$. Denote by  $P_\infty$ the  prime end of $\Omega$ to which the slit converges, and
consider the Riemann mapping $h:\D\to\Omega$ with $h(0)=0$ and $h(1)=P_\infty$
(in the sense of Carath\'eodory boundary correspondence). Then the family of functions
$$
\phi_t(z)=h^{-1}(h(z)+t),\;\;\;\;\;z\in \mathbb D,\;t\geq 0,
$$
forms a semigroup with Denjoy-Wolff point $\tau=1$ and Koenigs domain $\Omega$.
The theory of semigroups is the interplay of geometric properties of $\Omega$, dynamical properties of $(\phi_t)_{t\geq 0}$, and analytic properties of the Koenigs function $h$ (and of the infinitesimal generator of the semigroup, which is not needed in the present work).

\medskip

For $z\in\mathbb D$, the curve
\begin{equation}\label{i3}
\gamma_z=\{\phi_t(z):\;t\geq 0\}\subset \mathbb D
\end{equation} 
is the {\it orbit} (or {\it trajectory}) starting from $z$. Note that the image of $\gamma_z$ under $h$ is the half-line $\{h(z)+t:t\geq 0\}$; thus, the Koenigs function linearizes the orbits. 

\medskip

We will now use some simple facts from the hyperbolic geometry of the unit disk. We denote by $\rho_{\mathbb D}(z,w)$ the hyperbolic distance of the points $z,w\in \mathbb D$; (the definition and main properties of the hyperbolic distance are briefly presented in Section 2). The diameter $(-\tau,\tau)$ of $\mathbb D$ is a hyperbolic geodesic. If $t\geq 0$, then $\phi_t(0)$ is a point on the orbit $\gamma_0$ starting from the origin. Let $\pi(\phi_t(0))$ be the hyperbolic projection of $\phi_t(0)$ on $(-\tau,\tau)$; that is, $\pi(\phi_t(0))$ is the point on 
 $(-\tau,\tau)$ of least hyperbolic distance from $\phi_t(0)$.
 
 \medskip
 
Bracci \cite{Bra} introduced three functions (speeds) associated to a semigroup  $(\phi_t)$:
\begin{eqnarray}\label{i4}
v(t)&=&\rho_{\mathbb D}(0,\phi_t(0)), \;\;\;t\geq 0 \;\;\;\hbox{(total speed)},\\
 \label{i5}
v^o(t)&=&\rho_{\mathbb D}(0,\pi(\phi_t(0))), \;\;\;t\geq 0 \;\;\;\hbox{(orthogonal speed)},\\
\label{i6}
v^T(t)&=&\rho_{\mathbb D}(\phi_t(0),\pi(\phi_t(0))), \;\;\;t\geq 0 \;\;\;\hbox{(tangential speed)}.
\end{eqnarray}  
Bracci showed that the study of these speeds reveals several deep properties of the semigroup and posed some problems about the speeds. One of them is the following question (stated in our words).

{\bf Question 4} in \cite{Bra}: Suppose that $(\phi_t)$, $(\widetilde{\phi}_t)$ are 
semigroups with Denjoy-Wolff points $\tau,\widetilde{\tau}\in\partial \mathbb D$, Koenigs domains $\Omega,\widetilde{\Omega}$, and orthogonal speeds 
$v^o, \widetilde{v}^o$, respectively. Is it true that if $\Omega\subset\widetilde{\Omega}$, then
\begin{equation}\label{i7}
\liminf_{t\to\infty}[v^o(t)-\widetilde{v}^o(t)]>-\infty?
\end{equation} 
Bracci posed this question for parabolic semigroups beacause for other cases the answer is known. Our approach to this problem covers all cases.
It is observed in \cite{BCK} that (\ref{i7}) implies the existence of a constant $K>0$ such that for every $t\geq 0$,
\begin{equation}\label{i8}
|\phi_t(0)-\tau|\leq K\;|\widetilde{\phi}_t(0)-\widetilde{\tau}|.
\end{equation} 
Thus Question 4 asks whether a geometric property regarding the Koenigs domains 
$\Omega$ and $\widetilde{\Omega}$ (namely, the inclusion $\Omega\subset\widetilde{\Omega}$) implies a dynamic property (namely, that  $(\widetilde{\phi}_t)$ converges to $\widetilde{\tau}$ slower than 
$(\phi_t)$ converges to $\tau$). 

\medskip

Question 4 was studied by Bracci, Cordella, and Kourou \cite{BCK}. They gave a positive answer for a wide class of pairs of semigroups $(\phi_t)$, $(\widetilde{\phi}_t)$. They also proved that the answer is positive if the function
$\widetilde{v}^o$ is eventually increasing; (we use the term {\it increasing} in the weak sense, and use the term {\it strictly increasing} for the strong monotonicity).
Thus they were led to the following question.

{\bf Question (ii)} in \cite{BCK}: Does there exist a non-elliptic semigroup such that the orthogonal speed is not eventually increasing? 

We give a negative answer to this question:
\begin{theorem}\label{Th1}
Let $(\phi_t)$ be a non-elliptic semigroup in $\D$ with orthogonal speed $v^o$. The function $v^o$ is strictly increasing in $[0,+\infty)$.	
\end{theorem}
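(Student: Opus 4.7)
The plan is to reduce via the Cayley transform to the right half-plane and prove strict monotonicity of $|w(t)|$ for a suitable orbit.

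Assume $\tau=1$. Let $T(z)=(1+z)/(1-z)$ be the Cayley map $\D\to\mathbb{H}_r:=\{\operatorname{Re}w>0\}$, and set $\psi_t:=T\circ\phi_t\circ T^{-1}$, $H:=h\circ T^{-1}\colon\mathbb{H}_r\to\Omega$; so $H(1)=0$ and $\psi_t(w)=H^{-1}(H(w)+t)$. Under $T$, the geodesic $(-1,1)$ corresponds to the positive real axis of $\mathbb{H}_r$, and the hyperbolic projection of $\phi_t(0)$ onto $(-1,1)$ corresponds to $|\psi_t(1)|$. Writing $w(t):=\psi_t(1)=H^{-1}(t)$, we have $v^o(t)=\bigl|\log|w(t)|\bigr|$. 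By the Berkson--Porta representation, the generator of $(\psi_t)$ has the form $G(w)=2\tilde p(w)$ with $\tilde p\colon\mathbb{H}_r\to\overline{\mathbb{H}_r}$ holomorphic, so $w'(t)=2\tilde p(w(t))$ has non-negative real part. Integrating, $\operatorname{Re}w(t)\ge\operatorname{Re}w(0)=1$, whence $|w(t)|\ge\operatorname{Re}w(t)\ge 1$, with equality only at $t=0$ (non-ellipticity rules out $\psi_t(1)=1$ for $t>0$). Therefore $v^o(t)=\log|w(t)|$, and it suffices to show that $|w(t)|$ is strictly increasing on $[0,\infty)$.

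For the main step I would interpret monotonicity geometrically. For each $r>1$, the semicircle $\{|w|=r\}\cap\mathbb{H}_r$ is a hyperbolic geodesic of $\mathbb{H}_r$ perpendicular to $(0,\infty)$ at $w=r$; its image $\Sigma_r:=H(\{|w|=r\}\cap\mathbb{H}_r)$ is a hyperbolic geodesic of $\Omega$ perpendicular to $\Gamma:=h((-1,1))$ at the point at hyperbolic distance $\log r$ from $0$. Strict monotonicity of $|w(t)|$ is equivalent to the assertion that the positive real axis $[0,\infty)\subset\Omega$ (the orbit of the origin in the Koenigs picture) meets each $\Sigma_r$ in exactly one point.

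Ruling out a second intersection is the main obstacle. If $|w(t_1)|=|w(t_2)|=r$ for some $0\le t_1<t_2$, then an arc of $\Sigma_r$ together with the segment $[t_1,t_2]$ on the real axis bounds a relatively compact Jordan domain $R\subset\Omega$. Convexity of $\Omega$ in the positive direction forces the horizontal ray $[t_1,\infty)$ to remain in $\Omega$, hence to exit $R$ through $\Sigma_r$ and thereafter continue in $\Omega$ all the way to $P_\infty$. Combined with the facts that $\Sigma_r$ is a bi-infinite hyperbolic geodesic whose two prime-end endpoints on $\partial\Omega$ are both distinct from $P_\infty$ and that $\Gamma$ (also heading to $P_\infty$) meets $\Sigma_r$ exactly once, a careful prime-end/topological analysis should produce a contradiction. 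Once double intersections are ruled out, strict monotonicity follows, because a constant stretch of $|w(t)|$ on an interval would, by real-analyticity of $w$, imply $w(t)\in\{|w|=c\}$ for all $t$, contradicting $w(t)\to\infty$.
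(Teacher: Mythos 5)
Your reductions are sound and in fact closely mirror how the paper frames the problem: after passing to the Koenigs domain, strict monotonicity of $v^o$ is equivalent to the horizontal orbit $[0,\infty)$ meeting each geodesic of $\Omega$ perpendicular to the central geodesic $\Gamma=h((-\tau,\tau))$ in exactly one point. (The paper phrases this as strict monotonicity of $\operatorname{Re}p(t)$, where $p(t)$ is the foot of the perpendicular from $t$ to $\Gamma$, and crucially invokes a result from the cited reference \cite{Bets} that each vertical line meets $\Gamma$ at most once to make the reformulation clean; you have not noted that this auxiliary fact is itself needed and nontrivial.) Your Berkson--Porta step giving $\operatorname{Re}w(t)\ge 1$, hence $|w(t)|\ge 1$, is a pleasant observation but contributes nothing to monotonicity; it merely confirms $v^o\ge 0$.

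The genuine gap is in your ``main step.'' The sentence ``a careful prime-end/topological analysis should produce a contradiction'' is a placeholder for precisely what the theorem demands, and I do not see how a purely topological argument from convexity in the positive direction plus prime-end bookkeeping can work. If $[0,\infty)$ crossed $\Sigma_r$ at three consecutive points $t_1<t_2<t_3$, that is topologically consistent with $[0,\infty)$ running from the side of $\Sigma_r$ containing $0$ to the side whose closure contains $P_\infty$ (an odd crossing number), and the Jordan domain bounded by $[t_1,t_2]$ and an arc of $\Sigma_r$ creates no visible obstruction: the ray exits it through $\Sigma_r$, as you say, but nothing prevents it from re-entering the $0$-side of $\Sigma_r$ further along. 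Convexity in the positive direction controls horizontal rays issuing from points of $\Omega$; it does not by itself constrain how $[0,\infty)$ can weave back and forth across a fixed perpendicular geodesic. The paper's actual argument devotes an extended case analysis to exactly this and rests on genuinely analytic input: Solynin's polarization inequality for harmonic measure (used to compare harmonic measures across a reflected piece of the domain), the strong Markov property of harmonic measure, Hopf's boundary point lemma applied to a difference of two harmonic measures, and a computation identifying $\nabla\omega$ with the unit tangent to the perpendicular geodesic. None of that machinery is recoverable from the topological picture you describe, so as written your proposal correctly reduces the problem but does not solve it.
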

It follows that the answer to Question 4 is positive. Theorem \ref{Th1} has a simple geometric interpretation: Suppose for simplicity that $\tau=1$. For $x\in (0,1)$, let $\Gamma_x$ be the hyperbolic geodesic for $\D$ which is perpendicular to the real axis at the point $x$. Recall that $\Gamma_x$ is a circular arc orthogonal to the unit circle. By Theorem \ref{Th1}, for any $x\in (0,1)$, the orbit $\gamma_0$ intersects $\Gamma_x$ at exactly one point. Thus, if this orbit enters the hyperbolic halfplane bounded by $\Gamma_x$ and containing a neighborhood of $1$, then it never exits it. We remark that by Julia's Lemma, an analogous monotonicity property holds for horodisks instead of halfplanes.

\medskip

 One can define the orthogonal speed using any point in the unit disk (see \cite[Definition 3.5]{Bra}). Fix $z\in\D$ and let $\gamma$ be the hyperbolic geodesic having one end point at $1$, and such that $z\in\gamma$. For $t>0$, we denote by $\pi_{\gamma}(\phi_t(z))$ the hyperbolic projection of $\phi_t(z)$ on $\gamma$; that is, $\pi_{\gamma}(\phi_t(z))$ is the point on $\gamma$ having least hyperbolic distance from $\phi_t(z)$.  Let
\begin{equation}\label{genspeed}
v^{z}(t)=\rho_{\D}\left(z,\pi_{\gamma}(\phi_t(z))\right).
\end{equation}
The proof of Theorem \ref{Th1} does not suffer if we replace $v^o$ by $v^z$ and thus $v^{z}(t)$ is also a strictly increasing function of $t>0$, for any $z\in\D$.

\medskip

The article  \cite{BCK} contains also the following result: 
\\
{\it Suppose that $(\phi_t)$ is a non-elliptic semigroup. If the total speed is (eventually) increasing, then the orthogonal speed is also (eventually) increasing.}\\
In view of this result, one may conjecture that the total speed is an eventually increasing function. We show that this conjecture is false.
\begin{theorem}\label{Th2}
There exists a non-elliptic semigroup such that its total speed $v$ is not an eventually increasing function.
\end{theorem}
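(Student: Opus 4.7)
The plan is to construct a Koenigs domain $\Omega$, convex in the positive direction, whose orbit in $\D$ oscillates tangentially with sufficiently large amplitude for the total speed $v$ to fail to be eventually increasing. We choose rapidly growing sequences $0<s_1<s_2<\cdots$ and $1<H_1<H_2<\cdots$ and define $\Omega=\{(x,y)\in\R^2:f(x)<y<g(x)\}$, where $f$ is the non-increasing step function on $\R$ equal to $-1$ for $x<s_1$ and jumping down to $-H_{2j+1}$ at $x=s_{2j+1}$, and $g$ is the non-decreasing step function equal to $1$ for $x<s_2$ and jumping up to $H_{2j}$ at $x=s_{2j}$. Then $\Omega$ is simply connected, contains $0$, and is convex in the positive direction (since $f$ is non-increasing and $g$ non-decreasing); on each interval $(s_k,s_{k+1})$ the vertical cross-section of $\Omega$ is an asymmetric interval, with the dominant side alternating with the parity of $k$ because the most recent boundary jump wins.

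Let $h\colon\D\to\Omega$ be the Koenigs map with $h(0)=0$ and $\gamma_0=h^{-1}([0,\infty))$ the orbit in $\D$. The alternation of asymmetries forces $\gamma_0$ to visit both the upper and the lower halves of $\D$, and in particular to cross the real diameter at an infinite sequence of times $\tilde t_k\to\infty$, with $v^T(\tilde t_k)=0$ at each crossing. Between two consecutive crossings $v^T$ attains a maximum $V_k$ at some intermediate time $t_k^*$. By comparing $\Omega$ on each phase with an isolated asymmetric strip $\{-H_{k-1}<y<H_k\}$ (whose Koenigs map is explicit via $z\mapsto\exp(\pi z/(a+b))$ composed with a M\"obius transformation), and by handling the boundary jumps between phases using harmonic-measure / extremal-length estimates, one can show that the parameters $s_k,H_k$ may be chosen so that $V_k\to\infty$ while the orthogonal-speed increment $v^o(\tilde t_{k+1})-v^o(t_k^*)$ stays uniformly bounded.

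The hyperbolic Pythagorean identity $\cosh v(t)=\cosh v^o(t)\cosh v^T(t)$ gives the asymptotic $v(t)\approx v^o(t)+v^T(t)-\log 2$ when $v^o,v^T$ are large. Consequently
\begin{equation*}
v(t_k^*)-v(\tilde t_{k+1})\approx V_k-\log 2-\bigl(v^o(\tilde t_{k+1})-v^o(t_k^*)\bigr).
\end{equation*}
Theorem~\ref{Th1} makes the last difference positive, but it is bounded by construction while $V_k\to\infty$, so $v(t_k^*)>v(\tilde t_{k+1})$ for all sufficiently large $k$; therefore $v$ is not eventually increasing. The main obstacle is the quantitative estimate in the second paragraph: one must show that the orbit of the staircase domain closely tracks the circular-arc orbit of the isolated asymmetric strip in each phase, with uniformly controllable error at the boundary transitions. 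This is an extremal-length/harmonic-measure argument that must be carried out uniformly in $k$.
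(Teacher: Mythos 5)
Your proposal takes a genuinely different route from the paper's. The paper builds its Koenigs domain by subtracting a sequence of left-pointing horizontal slits from $\C$ and shows \emph{directly} that $\rho_\Omega(0,t)$ dips just after $t$ passes each slit tip: the key ingredients are the explicit $\sinh^2$ formula for the hyperbolic metric of a half-plane applied after the square-root map, J\o rgensen's hyperbolic-convexity theorem (used to force the relevant geodesic through a fixed control arc $K$), and Carath\'eodory kernel convergence (Lemma~\ref{PL2}) to preserve the dip of fixed size $\delta$ as further slits are added. No speed decomposition is used, and the argument is independent of Theorem~\ref{Th1}. You instead build a staircase domain with alternating asymmetry and reduce the problem, via the hyperbolic Pythagorean identity $\cosh v=\cosh v^o\cosh v^T$ together with Theorem~\ref{Th1}, to a single quantitative claim: that the tangential-speed peaks $V_k$ can be made to tend to infinity while the orthogonal-speed gain $v^o(\tilde t_{k+1})-v^o(t_k^*)$ over each phase stays uniformly bounded.

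The reduction in your second and third paragraphs is logically sound, and it is conceptually illuminating: it isolates exactly which orbit behavior produces a non-monotone $v$. The concern is that the unproved quantitative claim is the whole theorem. A back-of-envelope check suggests it should be achievable --- producing a tangential spike of size $V_k\approx\tfrac12\log(H_k/H_{k-1})$ needs the phase to have Euclidean length on the order of $H_{k-1}\log(H_k/H_{k-1})$, while keeping the orthogonal increment bounded needs phase length $\lesssim H_{k-1}+H_k$, and these are compatible when $H_k/H_{k-1}\to\infty$ --- but turning this into a proof requires uniform (in $k$) hyperbolic-geometry or harmonic-measure estimates for the staircase domain, including control of how the geodesic $\gamma$ (which is a \emph{global} object determined by the entire domain, not just the current phase) behaves through the jumps, and control of the nearest-point projection there. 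That is real work, and it is not obviously easier than the paper's slit argument, which entirely avoids the $v^o/v^T$ decomposition. As written, the proposal should be read as a plausible alternative outline with its central technical lemma left open.
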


\medskip

In Section 3, we will prove a result concerning the asymptotic behavior of the total speed.
\begin{theorem}\label{Th3}
There exist a positive number $\alpha<1$ and a non-elliptic semigroup with total speed $v$ such that
\[
\limsup_{t\to +\infty}\frac{v(t)}{t^\alpha}=+\infty\;\;\;\; \hbox{and}\;\;\;\;
\liminf_{t\to +\infty}\frac{v(t)}{t^\alpha}=0.
\]
\end{theorem}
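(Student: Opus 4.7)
The plan is to engineer a Koenigs domain $\Omega$ whose local width oscillates strongly along the positive real axis: narrow on certain long intervals (forcing $v$ to grow faster than $t^\alpha$) and very wide on subsequent, much longer intervals (forcing $v$ to stagnate relative to $t^\alpha$). Since $v(t)=\rho_\D(0,\phi_t(0))=\rho_\Omega(0,t)$ and the semigroup is non-elliptic, $v$ tends to $+\infty$, and a sufficiently wild oscillation of the width will push $v(t)/t^\alpha$ between $0$ and $+\infty$ for a well-chosen $\alpha\in(0,1)$.

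Concretely, I would take $\Omega=\{x+iy:\,|y|<f(x)\}$, where $f\colon\R\to(0,+\infty)$ is non-decreasing with $f(0)>0$; this guarantees that $\Omega$ is convex in the positive direction and contains the origin. The key analytic input is the two-sided comparison
\[
v(t)\asymp\int_0^t \frac{dx}{f(x)}
\]
valid up to universal constants for such domains: the upper bound comes from Koebe's theorem by integrating $\lambda_\Omega(z)\leq 1/d(z,\partial\Omega)$ along the real segment $[0,t]$, and the lower bound follows from an extremal length estimate for the family of vertical crosscuts of $\Omega$ lying over $[0,t]$. I would then choose $f$ inductively as a concatenation of two types of intervals. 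On a \emph{narrow} interval $[A_n,B_n]$ set $f\equiv c_n$ with $c_n\ll B_n^{1-\alpha}$ and $B_n-A_n$ very large; the contribution $(B_n-A_n)/c_n$ then dominates and gives $v(B_n)/B_n^\alpha\gtrsim B_n^{1-\alpha}/c_n\to+\infty$ as $n\to\infty$. On the following \emph{wide} interval $[B_n,A_{n+1}]$ let $f$ grow linearly as $f(x)=c_n x/B_n$ (so $f$ is continuous at $B_n$ and imitates a sector); the contribution to $v$ is then $(B_n/c_n)\log(A_{n+1}/B_n)$, logarithmic in $A_{n+1}$, so if $A_{n+1}$ is taken super-polynomially large in $B_n$ one obtains $v(A_{n+1})/A_{n+1}^\alpha\to 0$. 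Monotonicity of $f$ is preserved across rounds because $f$ is non-decreasing within each interval and $c_{n+1}\geq f(A_{n+1})$.

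The main obstacle is the lower bound $v(t)\gtrsim\int_0^t dx/f(x)$ during the narrow phases, which is what certifies that $v(B_n)$ really is as large as $B_n^{1-\alpha}/c_n$ rather than being shortcut through the wider part of $\Omega$ lying to the right of $B_n$. To handle this I would apply the extremal length or harmonic measure estimate alluded to above, perhaps dividing $\Omega$ into rectangles aligned with the plateaus of $f$ and using the multiplicative property of extremal length. Once this is established, calibrating the parameters so that $c_n/B_n^{1-\alpha}\to 0$ and $A_{n+1}/B_n^{1/\alpha}\to+\infty$ produces two sequences of times along which $v(t)/t^\alpha$ tends to $+\infty$ (at $t=B_n$) and to $0$ (at $t=A_{n+1}$), establishing the claim for any $\alpha\in(0,1)$.
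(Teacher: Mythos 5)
Your strategy matches the paper's: build a Koenigs domain $\Omega=\{x+iy:|y|<f(x)\}$ with width oscillating along the positive axis, and compare $v(t)=\rho_\Omega(0,t)$ with $\int_0^t dx/f(x)$. The paper realizes this with rectangles $(t_{n-1},t_n)\times(-t_n^{a_n},t_n^{a_n})$, where $t_n=2^{2^n}$ and $a_n$ alternates between $\frac{1}{2}$ and $\frac{1}{3}$; the doubly-exponential gaps make $\rho_\Omega(0,t_n)\asymp t_n^{1-a_n}$ (Lemmas \ref{L1.1} and \ref{L1.2}), so the single exponent $\alpha=7/12$ works. Your plateaus-plus-sectors variant realizes the same idea with more parameters to track, which you sketch but do not actually carry out. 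The one genuine soft spot is the lower bound $v(t)\gtrsim\int_0^t dx/f(x)$, the step you yourself flag as the main obstacle and propose to handle via extremal length of vertical crosscuts. That is heavier machinery than needed: since $\Omega$ is symmetric about $\R$, the segment $\R\cap\Omega$ is a hyperbolic geodesic of $\Omega$, so
\[
\rho_\Omega(0,t)=\int_0^t\lambda_\Omega(x)\,dx\ge\frac{1}{4}\int_0^t\frac{dx}{\dist(x,\partial\Omega)}\ge\frac{1}{4}\int_0^t\frac{dx}{f(x)},
\]
the middle inequality being Koebe's density estimate (equivalently the quasihyperbolic comparison \eqref{qh}). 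There is no shortcut to rule out --- the hyperbolic geodesic from $0$ to $t$ is literally the real segment, which is also what the paper exploits. With this observation in place your construction becomes a workable variant of the paper's.
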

This theorem provides an answer to another question of Bracci\footnote{Bracci and Cordella informed us that they have given an independent answer to that question.} (Question 5 in \cite{Bra}). 

In the last section of the paper we will use certain harmonic measure estimates to give an explicit lower bound for \eqref{i7}:
\begin{theorem}\label{Th4}
Suppose that $(\phi_t)$, $(\widetilde{\phi}_t)$ are 
semigroups with Denjoy-Wolff points $\tau,\widetilde{\tau}\in\partial \mathbb D$, Koenigs domains $\Omega,\widetilde{\Omega}$, and orthogonal speeds 
$v^o, \widetilde{v}^o$, respectively. If $\Omega\subset\widetilde{\Omega}$, then
\[
\liminf_{t\to\infty}[v^o(t)-\widetilde{v}^o(t)]\geq -\log 2.
\]
\end{theorem}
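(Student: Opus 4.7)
The plan is to work in the right half-plane model. Using the Möbius maps $\psi_\tau(z):=(\tau+z)/(\tau-z)$ (sending $\tau\mapsto\infty$, $0\mapsto 1$) and similarly $\psi_{\widetilde\tau}$, set $H:=h\circ\psi_\tau^{-1}\colon\Ha\to\Omega$ and $\widetilde H:=\widetilde h\circ\psi_{\widetilde\tau}^{-1}\colon\Ha\to\widetilde\Omega$, where $\Ha:=\{\mathrm{Re}\,w>0\}$. Both send $1\mapsto 0$ and ``$\infty\mapsto$ prime end at $+\infty$''. With $w_t:=H^{-1}(t)$ and $\widetilde w_t:=\widetilde H^{-1}(t)$, the hyperbolic projection of $\phi_t(0)$ onto $(-\tau,\tau)$ corresponds to the projection of $w_t$ onto $\R_+\subset\Ha$, namely $|w_t|$. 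Normalizing so that $\rho_{\D}(0,r)=\log\tfrac{1+r}{1-r}$, one gets $v^o(t)=\log|w_t|$ and $\widetilde v^o(t)=\log|\widetilde w_t|$, so the theorem becomes
\[
\liminf_{t\to\infty}\log\frac{|w_t|}{|\widetilde w_t|}\geq -\log 2,\qquad\text{i.e.,}\qquad \limsup_{t\to\infty}\frac{|\widetilde w_t|}{|w_t|}\leq 2.
\]

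The key tool is a harmonic-measure identity for the Euclidean modulus: by conformally mapping $\Ha$ to the strip $\{|\mathrm{Im}|<\pi/2\}$ via $w\mapsto\log w$ and using the horizontal-reflection symmetry through $\mathrm{Re}=\log|w|$, one checks that $|w|$ is the unique $r>0$ satisfying $\omega_{\Ha}(w,J_r)=\tfrac12$, where $J_r:=\{iy:|y|\leq r\}$. Pushing forward by $H$ and $\widetilde H$ through the prime-end boundary correspondence, one has $\omega_\Omega(t,H(J_{|w_t|}))=\tfrac12$ and $\omega_{\widetilde\Omega}(t,\widetilde H(J_{|\widetilde w_t|}))=\tfrac12$. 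Since $r\mapsto\omega_{\widetilde\Omega}(t,\widetilde H(J_r))$ is strictly increasing, the desired estimate reduces to showing that
\[
\omega_{\widetilde\Omega}\bigl(t,\widetilde H(J_{2|w_t|})\bigr)\geq \tfrac12-o(1)\qquad\text{as }t\to\infty.
\]

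To derive this, I would decompose a Brownian motion in $\widetilde\Omega$ starting from $t$ at its first hitting time of $\partial\Omega$, applying the strong Markov property. The contribution from trajectories exiting through the common boundary $\partial\Omega\cap\partial\widetilde\Omega$ is controlled by the identity $\omega_\Omega(t,H(J_{|w_t|}))=\tfrac12$, while trajectories exiting through the ``internal'' part $\partial\Omega\cap\widetilde\Omega$ continue their excursion in $\widetilde\Omega\setminus\Omega$. The factor-of-$2$ enlargement from $J_{|w_t|}$ to $J_{2|w_t|}$ is calibrated so that, thanks to the convexity of both $\Omega$ and $\widetilde\Omega$ in the positive direction (which forces the boundary arcs $H(J_r)$, $\widetilde H(J_r)$ to separate $0$ from the respective prime end at $+\infty$), a Beurling-type projection estimate absorbs the loss from these internal excursions.

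The main obstacle is precisely this Beurling-projection step: one must show quantitatively that, when the internal boundary $\partial\Omega\cap\widetilde\Omega$ is replaced (``projected'') by a model configuration compatible with the starlike-at-infinity geometry, the resulting change in harmonic measure costs a multiplicative factor of at most $2$ in the level-set parameter $r$---yielding exactly the additive constant $\log 2$ on the log scale. This is the step where the specific choice of the constant $\log 2$ in the statement is verified and where the parabolic case (in which Schwarz--Pick arguments degenerate because the orbit may approach $\tau$ tangentially, so that $|\arg w_t|\to\pi/2$) is handled.
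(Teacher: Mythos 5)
Your approach is genuinely different from the paper's, and as you yourself acknowledge, it is incomplete in its crucial step. The paper works directly with the trajectory arcs $A_t=\{z\in\D: h(z)\in[t,\infty)\}$ and $\widetilde A_t$ inside the unit disk: conformal invariance and the inclusion $\Omega\subset\widetilde\Omega$ give the clean comparison $\omega(0,A_t,\D\setminus A_t)\leq \omega(0,\widetilde A_t,\D\setminus\widetilde A_t)$; the right side is then bounded \emph{above} by a half-disk harmonic measure $\frac{2}{\pi}\tan^{-1}\bigl(\frac{1-\widetilde\pi_t^2}{2\widetilde\pi_t}\bigr)$ using the fact (from Theorem~\ref{Th1}) that $\widetilde A_t\subset\overline{\widetilde R_t}$, and the left side is bounded \emph{below} by $\frac{1}{2\pi}\tan^{-1}\bigl(\frac{1-\pi_t^2}{2\pi_t}\bigr)$ via a projection theorem. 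The factor of $4$ in the resulting inequality $\tan^{-1}x_t\leq 4\tan^{-1}\widetilde x_t$ becomes $-\log 2$ after taking the $\tfrac12\log$. Your half-plane level-set identity $\omega_\Ha(w,J_r)=\tfrac12\iff r=|w|$ is correct and elegant, but from there the argument has two real gaps. First, the reduction to ``$\omega_{\widetilde\Omega}(t,\widetilde H(J_{2|w_t|}))\geq\tfrac12-o(1)$'' does not suffice: since $r\mapsto\omega(J_r)$ could be nearly flat near $r=|\widetilde w_t|$, the conclusion $|\widetilde w_t|\leq(2+o(1))|w_t|$ does not follow; you would need something like $\omega_{\widetilde\Omega}(t,\widetilde H(J_{(2+\epsilon)|w_t|}))\geq\tfrac12$ for all large $t$, for each fixed $\epsilon>0$. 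Second, and more seriously, the Beurling-projection/strong-Markov step that is supposed to produce the factor $2$ is only sketched; you say yourself that this is ``the main obstacle'' and ``where the specific choice of $\log 2$ is verified,'' so the proof as written does not establish the theorem. Notably, you never actually invoke the monotonicity $\Omega\subset\widetilde\Omega$ at the level of harmonic measures in a way that isolates the factor of $2$; the paper's split into an upper and lower bound is exactly what makes the constant computable, and your plan does not yet reproduce it.
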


It is possible to generalize this bound for the orthogonal speeds with starting point any $z\in\D$, as defined in \eqref{genspeed}. Write
\[
v^z(t)-\widetilde{v}^z(t)=v^z(t)-v^o(t)+v^o(t)-\widetilde{v}^o(t)+\widetilde{v}^o(t)-\widetilde{v}^z(t).
\]
The term $v^o(t)-\widetilde{v}^o(t)$ can be estimated using Theorem \ref{Th4} while the other two terms, $v^z(t)-v^o(t)$ and $\widetilde{v}^o(t)-\widetilde{v}^z(t)$, can be estimated using \cite[Lemma 3.7 (3)]{Bra}. From this it follows that the following bound holds.
\[
\liminf_{t\to\infty}[v^z(t)-\widetilde{v}^z(t)]\geq -\log 2-2\log\frac{1+|z|}{1-|z|}.
\]

\section{Preliminaries}
\subsection{Hyperbolic distance}
The hyperbolic metric in the unit disk is 
\[
\lambda_{\D}(z)|dz|=\frac{|dz|}{1-|z|^2}.
\]
This metric induces a distance in $\D$ in the usual way:
\[
\rho_{\D}(z,w)=\inf_{\gamma}\int_{\gamma}\lambda_{\D}(\zeta)|d\zeta|,\;\;\;z,w\in\D,
\]
where the infimum is taken over all smooth curves $\gamma$ contained in $\D$ and joining $z$ to $w$. A minimizing curve for this infimum is called a hyperbolic geodesic. For the unit disk, the hyperbolic geodesics are either radial segments or arcs of circles orthogonal to $\partial\D$. For further reading on the hyperbolic distance and related topics, we refer to \cite{BM}, \cite{BCD} as well as references therein.

Since for any conformal automorphism of the unit disk, $T$, we have the identity
\[
\frac{|T'|}{1-|T|^2}=\frac{1}{1-|z|^2},
\]
it follows that each such map $T$ is an isometry for $\rho_{\D}$. This allows us to define the hyperbolic distance in any simply connected domain which is conformally equivalent to $\D$. If $\Omega$ is such a domain, let $f:\D\to\Omega$ be a conformal map and define for $z,w\in\Omega$,
\[
\rho_{\Omega}(z,w)=\rho_{\D}(f^{-1}(z),f^{-1}(w)),
\]
and 
\[
\lambda_{\Omega}(z)=\frac{\lambda_{\D}(f^{-1}(z))}{|f'(f^{-1}(z))|}.
\]
Due to the observation above about the class of automorphisms of $\D$, these definitions do not depend on the choice of the map $f$. 

We can explicitly calculate the hyperbolic distance in $\D$:
\[
\rho_{\D}(z,w)=\frac{1}{2}\log\frac{1+|T(z,w)|}{1-|T(z,w)|},\ z,w\in\D,
\]
where $T(z,w)=\frac{z-w}{1-z\overline{w}}$. Using a conformal map from the unit disk onto the upper half plane, $\Ha$, it can be shown that
\[
\lambda_{\Ha}(z)|dz|=\frac{|dz|}{2\Im z},
\]
and in a similar way, if $P$ is any open half plane, then
\begin{equation}\label{lamb}
\lambda_P(z)|dz|=\frac{|dz|}{2\dist(z,\partial P)}.
\end{equation}
See, for example, \cite{BM}. Using the Schwarz-Pick Lemma, it can be shown that the hyperbolic distance decreases when the domain gets larger, i.e., if $D_1\subset D_2$ are two simply connected domains, then 
\[
\rho_{D_1}(z,w)\geq \rho_{D_2}(z,w),
\]
for every $z,w\in D_1$. 

The quasihyperbolic distance in a simply connected domain $\Omega$, from $z\in\Omega$ to $w\in\Omega$, is 
\[
Q_{\Omega}(z,w)=\inf_{\gamma}\int_{\gamma}\frac{|d\zeta|}{\dist(\zeta,\partial\Omega)},
\]
where the infimum is taken over all smooth curves in $\Omega$ joining $z$ to $w$. Even though this is not a conformally invariant quantity, it can be used to estimate the hyperbolic distance in $\Omega$:
\begin{equation}\label{qh}
\frac{1}{4}Q_{\Omega}(z,w)\leq \rho_{\Omega}(z,w)\leq Q_{\Omega}(z,w),
\end{equation}
for all $z,w\in\Omega$. For a proof, see \cite{BM}.

We now state two key facts that we shall need.
\begin{lemma}\label{PL1}
Suppose that $H$ is any disk or half-plane. Then for all $z,w\in H$, 
\[
\sinh^2\left(\rho_{H}(z,w)\right)=|z-w|^2\lambda_{H}(z)\lambda_{H}(w).
\]
\end{lemma}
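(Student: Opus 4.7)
The plan is to reduce to the case $H=\mathbb D$ by invariance, and then verify the identity on $\mathbb D$ by direct computation using the closed-form expression for $\rho_{\mathbb D}$ already recorded in the excerpt.

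First I would check that the right-hand side is a Möbius invariant. If $f:\mathbb D\to H$ is a conformal (hence Möbius) map and $z',w'\in\mathbb D$ satisfy $f(z')=z,f(w')=w$, then $\rho_H(z,w)=\rho_{\mathbb D}(z',w')$ by definition of the hyperbolic metric in $H$, and $\lambda_H(f(z'))=\lambda_{\mathbb D}(z')/|f'(z')|$. Thus the claim on $H$ is equivalent to
\[
\sinh^2\bigl(\rho_{\mathbb D}(z',w')\bigr)=\frac{|f(z')-f(w')|^2}{|f'(z')|\,|f'(w')|}\,\lambda_{\mathbb D}(z')\lambda_{\mathbb D}(w'),
\]
so it suffices to show that $|f(z')-f(w')|^2/(|f'(z')||f'(w')|)=|z'-w'|^2$. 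Writing $f(\zeta)=(a\zeta+b)/(c\zeta+d)$ with $ad-bc\neq 0$, a one-line computation gives $f(z')-f(w')=(ad-bc)(z'-w')/[(cz'+d)(cw'+d)]$ and $|f'(\zeta)|=|ad-bc|/|c\zeta+d|^2$, so the quotient collapses to $|z'-w'|^2$. Consequently both sides of the asserted identity are Möbius invariant, and we may assume $H=\mathbb D$.

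On $\mathbb D$, set $t=|T(z,w)|=|z-w|/|1-z\overline w|\in[0,1)$. From the formula $\rho_{\mathbb D}(z,w)=\tfrac12\log\frac{1+t}{1-t}$ recalled in the excerpt, a short manipulation of $\sinh=\tfrac12(e^u-e^{-u})$ with $u=\tfrac12\log\frac{1+t}{1-t}$ yields
\[
\sinh\bigl(\rho_{\mathbb D}(z,w)\bigr)=\frac{t}{\sqrt{1-t^2}},\qquad
\sinh^2\bigl(\rho_{\mathbb D}(z,w)\bigr)=\frac{t^2}{1-t^2}.
\]
The standard Möbius identity $|1-z\overline w|^2-|z-w|^2=(1-|z|^2)(1-|w|^2)$ then gives
\[
\frac{t^2}{1-t^2}=\frac{|z-w|^2}{|1-z\overline w|^2-|z-w|^2}=\frac{|z-w|^2}{(1-|z|^2)(1-|w|^2)}=|z-w|^2\lambda_{\mathbb D}(z)\lambda_{\mathbb D}(w),
\]
which is exactly the claim on $\mathbb D$. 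Combined with the Möbius invariance step, this proves the lemma for every disk or half-plane $H$.

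There is no real obstacle here; the only point that requires care is verifying that the right-hand side is genuinely Möbius invariant, since $|z-w|^2$ and $\lambda_H$ individually are not, and one must be sure that the combination with the two density factors at $z$ and $w$ produces a cancellation of the conformal factors. Once that invariance is established, the rest is the explicit computation on $\mathbb D$, which is immediate from the closed form of $\rho_{\mathbb D}$ and the standard identity for $|1-z\overline w|^2-|z-w|^2$.
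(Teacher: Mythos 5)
Your proof is correct. The paper itself does not give a proof of this lemma; it simply cites Beardon–Minda, Theorem 7.4. Your argument is a clean, self-contained verification: you correctly observe that both sides transform the same way under M\"obius maps (the crucial cancellation being $|f(z')-f(w')|^2 = |f'(z')||f'(w')|\,|z'-w'|^2$ for any M\"obius $f$, together with $\lambda_H\circ f = \lambda_{\mathbb D}/|f'|$), reduce to $H=\mathbb D$, and then close the computation on $\mathbb D$ using $\sinh\bigl(\tfrac12\log\tfrac{1+t}{1-t}\bigr)=t/\sqrt{1-t^2}$ and the identity $|1-z\overline w|^2-|z-w|^2=(1-|z|^2)(1-|w|^2)$. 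Every step checks out, and this is essentially the standard proof one would find in the cited source.
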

For a proof, see \cite[Theorem 7.4]{BM}.
\begin{lemma}\label{PL2}
Suppose that $\Omega_n$, $n\in\N$, is a sequence of simply connected domains, containing $0$, which converges in the sense of Carath\'eodory to a simply connected domain $\Omega_0\neq\C$ which contains $0$. Let $K$ be a compact set contained in each $\Omega_n$, $n\geq 0$. Suppose that there exists a simply connected domain $K'$ which contains $K$ and is contained in $\Omega_n$, for all $n\geq 0$. Then
for any fixed $\zeta\in K$,
\begin{equation}\label{h1}
\sup_{w\in K}\lvert \rho_{\Omega_n}(w,\zeta)-\rho_{\Omega_0}(w,\zeta)\rvert\to 0,
\end{equation}
as $n\to +\infty$. In particular, for all $a,b\in K$,
\begin{equation}\label{h2}
\rho_{\Omega_n}(a,b)\to\rho_{\Omega_0}(a,b),\ \hbox{as} \ n\to +\infty.
\end{equation}
.
\end{lemma}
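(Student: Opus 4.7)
The plan is to reduce the statement, via Riemann mappings, to continuity of the hyperbolic distance in the unit disk on compact subsets of $\D\times\D$, and then invoke Carath\'eodory's kernel theorem. More precisely, I would fix Riemann mappings $f_n:\D\to\Omega_n$ normalized by $f_n(0)=0$ and $f_n'(0)>0$ for each $n\geq 0$. Since $\Omega_n\to\Omega_0$ in the Carath\'eodory sense and $\Omega_0\neq\C$, the kernel convergence theorem gives $f_n\to f_0$ locally uniformly on $\D$, and consequently $f_n^{-1}\to f_0^{-1}$ locally uniformly on $\Omega_0$. In view of the conformal invariance
\[
\rho_{\Omega_n}(w,\zeta)=\rho_{\D}(f_n^{-1}(w),f_n^{-1}(\zeta)),
\]
and the fact that $\rho_{\D}$ is uniformly continuous on compact subsets of $\D\times\D$, it will suffice to show that for all large $n$ the sets $f_n^{-1}(K)\cup\{f_n^{-1}(\zeta)\}$ lie in a single compact subset $L\Subset\D$, and that $f_n^{-1}\to f_0^{-1}$ uniformly on $K$.

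The hypothesis about the "buffer" domain $K'$ is used here to establish the required uniform confinement in $\D$. Pick any base point $p\in K'$ (for instance one on the continuum joining $K$ to $0$ inside $\Omega_0$, slightly fattened so it lies in $K'$). Since $K'\subset\Omega_n$ is a simply connected subdomain, Schwarz--Pick yields the hyperbolic contraction
\[
\rho_{\Omega_n}(p,w)\leq \rho_{K'}(p,w)\quad\text{for all } w\in K,\ n\geq 0.
\]
The right-hand side is bounded by the finite hyperbolic diameter of $K$ in $K'$ together with the distance from $p$ to $K$ in $K'$. Translating this to the disk via $f_n^{-1}$ and using $f_n^{-1}(p)\to f_0^{-1}(p)\in\D$ (which keeps $f_n^{-1}(p)$ bounded away from $\partial\D$), one concludes that there is a compact $L\subset\D$ containing $f_n^{-1}(K)\cup\{f_n^{-1}(\zeta)\}$ for all $n\geq 0$.

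Having fixed such an $L$, uniform continuity of $\rho_{\D}$ on $L\times L$ and uniform convergence of $f_n^{-1}$ on $K$ combine to give
\[
\sup_{w\in K}\bigl|\rho_{\Omega_n}(w,\zeta)-\rho_{\Omega_0}(w,\zeta)\bigr|
=\sup_{w\in K}\bigl|\rho_{\D}(f_n^{-1}(w),f_n^{-1}(\zeta))-\rho_{\D}(f_0^{-1}(w),f_0^{-1}(\zeta))\bigr|\longrightarrow 0,
\]
which is \eqref{h1}; the pointwise conclusion \eqref{h2} is immediate. The main technical obstacle is the second step: preventing $f_n^{-1}(K)$ from escaping toward $\partial\D$ as $n$ varies. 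The role of the common simply connected subdomain $K'$ containing $K$ is precisely to supply, via the monotonicity of the hyperbolic metric under inclusion, a uniform hyperbolic bound which then translates to the desired compact confinement in the disk.
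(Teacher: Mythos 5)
Your proposal is correct and follows the same broad strategy as the paper — transfer to the disk via normalized Riemann maps $f_n$, invoke the kernel theorem, and use the buffer domain $K'$ through Schwarz--Pick to secure uniformity — but the organization of the final step is genuinely different. The paper first proves the pointwise statement \eqref{h2} by the hyperbolic triangle inequality in $\D$ together with pointwise convergence of $g_n=f_n^{-1}$ (citing a result from \cite{BCD}), and then upgrades to the uniform statement \eqref{h1} by a subsequence argument carried out in the domains: choose $w_n\in K$ realizing the supremum, pass to $w_n\to w_0$, split by the triangle inequality, and kill two of the three terms using $\rho_{\Omega_n}(w_n,w_0)\le\rho_{K'}(w_n,w_0)\to 0$. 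You instead work uniformly in $\D$ from the start: you use $K'$ and Schwarz--Pick to confine $f_n^{-1}(K)\cup\{f_n^{-1}(\zeta)\}$ inside a single compact $L\Subset\D$, and then combine locally uniform convergence of $f_n^{-1}$ on $\Omega_0$ with the continuity of $\rho_{\D}$ on $L\times L$. This is a valid alternative; it avoids the subsequence extraction but requires the slightly stronger fact that $f_n^{-1}\to f_0^{-1}$ \emph{locally uniformly}, which you assert but should justify (it follows, for instance, from pointwise convergence plus normality of $\{f_n^{-1}\}$ on $K'$, since these maps are uniformly bounded by $1$). Also, for the anchoring step the natural choice of base point is simply $p=\zeta\in K\subset K'$, which avoids the somewhat awkward remark about a continuum joining $K$ to $0$.
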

\begin{proof}
We first prove \eqref{h2}. Let $\Omega_n$, $n\geq 0$ be as in the statement of the Lemma and suppose that $\Omega_n\to\Omega_0$, as $n\to +\infty$, in the sense of Carath\'eodory. For each $n\in\N$, let $f_n$ be the conformal map from $\D$ onto $\Omega_n$ satisfying $f_n(0)=0$ and $f_n'(0)>0$. By the kernel convergence theorem, the sequence $f_n$ converges locally uniformly, in $\D$, to a conformal map $f$ from $\D$ onto $\Omega_0$. Let $K$ satisfy the hypotheses of the Lemma and fix $a,b\in K$. For each $n\geq 0$, let $g_n=f_n^{-1}:\Omega_n\to\D$. Note that by conformal invariance and the triangle inequality,
\begin{align*}
\lvert\rho_{\Omega_n}(a,b)-\rho_{\Omega_0}(a,b)\rvert &= \lvert \rho_{\D}(g_n(a),g_n(b))-\rho_{\D}(g_0(a),g_0(b))\rvert \\
&\leq \rho_{\D}(g_n(a),g_0(a))+\rho_{\D}(g_n(b),g_0(b)).
\end{align*}
By \cite[Theorem 3.5.8]{BCD}, $g_n(w)$ tends to $g_0(w)$, for any $w\in K$ and thus \eqref{h2} holds. 

We will now prove \eqref{h1}. For each $n$, let $w_n$ be the point of $K$ where the supremum is attained . By passing to a subsequence, if needed, we may assume that $w_n$ converges to some point $w_0\in K$. Then
\begin{align*}
\lvert \rho_{\Omega_n}(w_n,\zeta)-\rho_{\Omega_0}&(w_n,\zeta)\rvert \leq \lvert\rho_{\Omega_n}(w_n,\zeta)-\rho_{\Omega_n}(w_0,\zeta)\rvert +\\
&+\lvert\rho_{\Omega_n}(w_0,\zeta)-\rho_{\Omega_0}(w_0,\zeta)\rvert +
\lvert\rho_{\Omega_0}(w_0,\zeta)-\rho_{\Omega_0}(w_n,\zeta)\rvert .
\end{align*}
Let $K'$ be a simply connected domain containing $K$ and contained in $\Omega_n$, for all $n\geq 0$. Then note that, by the triangle inequality and the domain monotonicity property for the hyperbolic distance, we have
\[
\lvert\rho_{\Omega_n}(w_n,\zeta)-\rho_{\Omega_n}(w_0,\zeta)\rvert\leq \rho_{\Omega_n}(w_n,w_0)\leq \rho_{K'}(w_n,w_0)\to 0,
\]
as $n\to +\infty$, since $w_n\to w_0$ and they both stay away from $\partial K'$. Using the same argument, we see that $\lvert\rho_{\Omega_0}(w_0,\zeta)-\rho_{\Omega_0}(w_n,\zeta)\rvert$ tends to $0$ as well. Finally, by \eqref{h2}, $\lvert\rho_{\Omega_n}(w_0,\zeta)-\rho_{\Omega_0}(w_0,\zeta)\rvert\to 0$. The conclusion follows.
\end{proof}

\subsection{Harmonic measure. Basic properties.}
Let $\Omega\neq\C$ be a simply connected domain and let $E\subset\partial\Omega$ be a Borel set. The harmonic measure of $E$ with respect to $\Omega$ is the Perron solution, $u$, of the Dirichlet problem for the Laplacian in $\Omega$ with boundary function equal to $1$ on $E$ and $0$ on $\partial\Omega\setminus E$. We will use the standard notation $u(z)=\omega(z,E,\Omega)$, $z\in\Omega$. We refer to \cite{BCD}, \cite{Ran}  for  presentations of the theory of harmonic measure.

\medskip

We review some basic properties of harmonic measure. We start with
 its domain
monotonicity: If $\Omega_1\subset\Omega_2$, $E\subset \partial
\Omega_1\cap\partial\Omega_2$, and $z\in \Omega_1$, then
\begin{equation}\label{h10}
\omega(z,E,\Omega_1)\leq \omega(z,E,\Omega_2).
\end{equation}
There is a more precise statement which may be called the strong
Markov property of harmonic measure (see \cite[p.117]{Doo}): If
$\Omega_1\subset\Omega_2$, $E\subset \partial\Omega_2$,
and $z\in \Omega_1$, then
\begin{equation}\label{h11}
\omega(z,E,\Omega_2)=\omega(z,E\cap \partial \Omega_1,\Omega_1)+\int_{\Omega_2\cap \partial \Omega_1}
\omega(z,d\zeta,\Omega_1)\;\omega(\zeta,E,\Omega_2).
\end{equation}

Another property of harmonic measure  is that it
is invariant under conformal mappings; see
\cite[Th. 4.3.8]{Ran}.
We will also need the following reflection property. Here and below, the superscript $*$ denotes reflection in the real axis.

\begin{lemma}\label{RL}
Let $D$ be a  simply connected domain in the upper half plane and assume that the set $\partial D\cap\mathbb R$ is an interval $I$. Set $\delta=\partial D\setminus I$. Let $G$ be a simply connected domain in the lower half plane such that $\partial G\cap\mathbb R=I$. Assume  that $G^*\subset D$. Consider the simply connected domain $\Omega:=D\cup I\cup G$. \\
{\rm (a)} For every $z\in G$ and every Borel subset $B$ of $I$, $\omega(z,B,G)\leq \omega(\bar{z},B,D)$.\\
{\rm (b)} For every $z\in G\cup I$, $\omega(z,\delta^*\cap\partial G,\Omega)\leq \omega(\bar{z},\delta,\Omega)$.\\
{\rm (c)} For every $z\in G$ and every Borel subset $\sigma$ of $\partial G\setminus I$ satisfying $\sigma^*\subset \delta$, we have $\omega(z,\sigma,G)\leq\omega(\bar{z},\delta,D)$.
\end{lemma}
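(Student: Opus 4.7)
My plan for the lemma is to obtain parts (a) and (c) by direct application of reflection and domain monotonicity, and to prove (b) by first reducing the $z\in G$ case to the $z\in I$ case via two applications of the strong Markov property (\ref{h11}) together with (a) and (c), and then handling the $z\in I$ case by a self-referential iteration of the strong Markov property.

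For (a), since $B\subset I=I^*$, invariance of harmonic measure under the anticonformal reflection gives $\omega(z,B,G)=\omega(\bar z,B,G^*)$, and then $G^*\subset D$ with $B\subset\partial G^*\cap\partial D$ lets domain monotonicity (\ref{h10}) conclude $\omega(\bar z,B,G^*)\leq\omega(\bar z,B,D)$. For (c), the same reflection gives $\omega(z,\sigma,G)=\omega(\bar z,\sigma^*,G^*)\leq\omega(\bar z,\sigma^*,D)\leq\omega(\bar z,\delta,D)$, where the final step uses $\sigma^*\subset\delta$.

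Set $\sigma_0:=\delta^*\cap\partial G$, so $\sigma_0^*\subset\delta$. Granting for the moment the $z\in I$ case of (b), namely $\omega(\zeta,\sigma_0,\Omega)\leq\omega(\zeta,\delta,\Omega)$ for $\zeta\in I$, the $z\in G$ case follows by decomposing $\omega(z,\sigma_0,\Omega)$ via strong Markov at first exit from $G$ and $\omega(\bar z,\delta,\Omega)$ via strong Markov at first exit from $D$ (using $\bar z\in G^*\subset D$). The resulting difference splits as $[\omega(\bar z,\delta,D)-\omega(z,\sigma_0,G)]$ plus an integral over $I$. The bracket is nonnegative by (c) applied with $\sigma=\sigma_0$, while the integrand is nonnegative because (a) gives $\omega(z,d\xi,G)\leq\omega(\bar z,d\xi,D)$ as measures on $I$, and the $z\in I$ case gives $\omega(\xi,\sigma_0,\Omega)\leq\omega(\xi,\delta,\Omega)$, so both factors move in the same direction.

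The main obstacle is therefore the $z\in I$ case. Let $\Omega_0:=G\cup I\cup G^*\subset\Omega$, a simply connected domain symmetric about $\mathbb R$; on $\partial\Omega_0$, the sets $\sigma_0$ and $\sigma_0^*$ both lie on $\partial\Omega$, and the remainder of $\partial\Omega_0$ lying in the interior of $\Omega$ is exactly $((\partial G\setminus I)\setminus\sigma_0)^*\subset D$. Applying (\ref{h11}) at first exit from $\Omega_0$ to both $\omega(\zeta,\sigma_0,\Omega)$ and $\omega(\zeta,\sigma_0^*,\Omega)$, and using the symmetry $\omega(\zeta,\sigma_0,\Omega_0)=\omega(\zeta,\sigma_0^*,\Omega_0)$ for $\zeta\in I$ to cancel the direct-exit contributions, the difference $\phi(z):=\omega(z,\sigma_0^*,\Omega)-\omega(z,\sigma_0,\Omega)$ satisfies
\[
\phi(\zeta)=\int_{((\partial G\setminus I)\setminus\sigma_0)^*}\phi(\eta)\,\omega(\zeta,d\eta,\Omega_0),\qquad\zeta\in I.
\]
For $\eta$ in this integration set one has $\eta\in D$, so a second application of (\ref{h11}) at first exit from $D$, together with $\sigma_0\cap\partial D=\emptyset$ and $\sigma_0^*\subset\delta\subset\partial D$, yields
\[
\phi(\eta)=\omega(\eta,\sigma_0^*,D)+\int_I\phi(\xi)\,\omega(\eta,d\xi,D),
\]
with source $\omega(\eta,\sigma_0^*,D)\geq 0$. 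Substituting back produces a fixed-point equation $\phi|_I=A+K\phi|_I$ with $A\geq 0$ and $K$ a positive kernel on bounded functions on $I$. Iterating gives $\phi|_I=\sum_{k=0}^{n-1}K^kA+K^n\phi|_I$; since $K^n 1(\zeta)$ is the probability that Brownian motion from $\zeta$ performs $n$ full cycles between $\Omega_0$ and $D$ without ever exiting $\Omega$, and since Brownian motion exits the simply connected domain $\Omega\neq\mathbb C$ in finite time almost surely, $K^n 1\to 0$ pointwise and the remainder vanishes. Hence $\phi|_I=\sum_{k\geq 0}K^kA\geq 0$, and combined with $\sigma_0^*\subset\delta$ this gives $\omega(\zeta,\sigma_0,\Omega)\leq\omega(\zeta,\sigma_0^*,\Omega)\leq\omega(\zeta,\delta,\Omega)$ on $I$, completing the proof of (b).
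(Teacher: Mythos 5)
Your proofs of (a) and (c) are exactly the paper's: reflect $G$ to $G^*$, use $G^*\subset D$ and monotonicity. For (b), however, you take a genuinely different route. The paper disposes of (b) in one line by invoking a polarization theorem of Solynin \cite{Sol} applied to $\Omega$ and the pair $\sigma_0=\delta^*\cap\partial G$, $\sigma_0^*\subset\delta$. You instead give a self-contained, elementary argument built on the strong Markov property: first reduce $z\in G$ to $z\in I$ by decomposing both sides at first exit from $G$ and from $D$ respectively (correct; the bracket is handled by (c) and the integral by (a) together with the $I$-case), and then establish the $I$-case by iterating the strong Markov property between the symmetric subdomain $\Omega_0=G\cup I\cup G^*$ and $D$, yielding the fixed-point identity $\phi|_I=A+K\phi|_I$ with $A\geq0$ and $K$ a positive kernel. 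This is a nice, more elementary alternative to the polarization citation, and it in fact proves the sharper pointwise inequality $\omega(\zeta,\sigma_0,\Omega)\leq\omega(\zeta,\sigma_0^*,\Omega)$ on $I$, which is precisely what Solynin's theorem delivers here.

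The one place you are brisker than you should be is the claim that $K^n1\to0$ pointwise because Brownian motion exits $\Omega$ in finite time. Finiteness of the exit time alone does not immediately kill the remainder: the path could in principle perform infinitely many $I\!\to\!S\!\to\!I$ crossings in a finite time interval while converging to a point. One must observe that such a limit point would lie in $\overline{I}\cap\overline{S}$; since $I\subset\mathbb R$ and $S=\big((\partial G\setminus I)\setminus\sigma_0\big)^*\subset D$ lies in the open upper half-plane, $\overline{I}\cap\overline{S}$ is contained in the (at most two-point, hence polar) set of endpoints of $I$, so that set is not hit before the exit time, and an accumulation of crossings at the exit time itself would force exit through a polar set, again an event of probability zero. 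This fills the gap; with it, $\phi|_I=\sum_{k\geq0}K^kA\geq0$ follows and your argument is complete. So: (a),(c) match the paper; (b) is a correct but different, more hands-on proof, provided you spell out why the remainder term vanishes.
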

\begin{proof}
The proofs of (a) and (c) are elementary. By symmetry and domain monotonicity,
$$
\omega(z,B,G)=\omega(\bar{z},B,G^*)\leq \omega(\bar{z},B,D),
$$
which proves (a). Similarly, if $\sigma$ is as in the statement (c), then
$$
\omega(z,\sigma,G)=\omega(\bar{z},\sigma^*,G^*)\leq \omega(\bar{z},\sigma^*,D)\leq \omega(\bar{z},\delta,D).
$$
Part (b) follows at once from a deep polarization result of A.Yu.Solynin \cite[Theorem 2]{Sol}.	
\end{proof}

\subsection{Harmonic measure and hyperbolic geodesics}

Let $\Omega\neq\C$ be a simply connected domain and let $f:\D\to\Omega$ be a conformal mapping of $\D$ onto $\Omega$. We denote by $\hat{f}$ the Carath\'eodory extension of $f$ on the boundary of $\D$. Thus $\hat{f}(\partial\D)$ is the set of prime ends of $\Omega$; we denote this set by $\partial_C\Omega$. If $p,q\in\partial_C\Omega$, we denote by $[q,p]_{\Omega}$ the hyperbolic geodesic for $\Omega$ which joins $q$ to $p$. Note that by definition, $[q,p]_{\Omega}=\hat{f}(\gamma)$, where $\gamma$ is the hyperbolic geodesic in the unit disk joining $\hat{f}^{-1}(q)$ to $\hat{f}^{-1}(p)$. If $a,b\in\Omega\cup\partial_C\Omega$, we denote by $[a,b]_{\Omega}$ the arc of the hyperbolic geodesic joining $a$ to $b$. We also define $(a,b)_{\Omega}=[a,b]_{\Omega}\setminus\{a,b\}$.

Suppose now that $q,p\in\partial_C\Omega$ and $\gamma=(q,p)_{\Omega}$ is the hyperbolic geodesic for $\Omega$ joining $q$ to $p$. Then $\gamma$ divides $\Omega$ into two subdomains $\Omega_{-}$ and $\Omega_{+}$. Let $z_0\in\Omega_{-}$. Let $p(z_0)$ be the point on $\gamma$ having minimal hyperbolic distance from $z_0$, i.e.,
\[
\rho_{\Omega}\left(z_0,p(z_0)\right)=\min_{\zeta\in\gamma}\rho_{\Omega}\left(z_0,\zeta\right).
\]
By mapping $\Omega$ conformally onto $\D$ so that $z_0$ corresponds to $0$, it is not hard to see that the geodesic $[z_0,p(z_0)]_{\Omega}$ meets $\gamma$ perpendicularly at $p(z_0)$.

Now choose a conformal map $\phi:\D\to\Omega$ from $\D$ onto $\Omega$ so that $\phi\left((-1,1)\right)=\gamma$ and $\phi^{-1}(z_0)=-it_0$, for some $t_0\in (0,1)$. Observe that the point in $(-1,1)$ having minimal hyperbolic distance, in $\D$, from $-it_0$ is $0$ and the geodesic joining these two points is the line segment $l:=[-it_0,0]$. By conformal invariance, it follows that $\phi(0)=p(z_0)$ and $\phi(l)=[z_0,p(z_0)]_{\Omega}$. Observe that $l$ is a hyperbolic geodesic in the lower semidisk $\D_{-}:=\D\cap\{z:\ \Im z<0\}$ as well. Since the restriction of $\phi$ on $\D_{-}$ is a conformal mapping onto $\Omega_{-}$, it follows that $[z_0,p(z_0)]_{\Omega}$ is also a geodesic for the domain $\Omega_{-}$, i.e.,
\[
[z_0,p(z_0)]_{\Omega}=[z_0,p(z_0)]_{\Omega_{-}}.
\]

We can now prove the following two needed results.
\begin{lemma}\label{PL3}
Let $\Omega$, $p$, $q$, $\gamma$, $z_0$, and $p(z_0)$ be as in the discussion above. Then
\[
\omega\left(z_0,(q,p(z_0)]_{\Omega},\Omega_{-}\right)=\omega\left(z_0,[p(z_0),p)_{\Omega},\Omega_{-}\right)=\frac{1}{2}\omega\left(z_0,\gamma,\Omega_{-}\right).
\]
\end{lemma}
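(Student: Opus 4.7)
The plan is to pull everything back to the unit disk via the normalized conformal map $\phi:\D\to\Omega$ constructed in the paragraph immediately preceding the statement of the lemma. Recall that $\phi$ is chosen so that $\phi((-1,1))=\gamma$, $\phi(0)=p(z_0)$, and $\phi^{-1}(z_0)=-it_0$ for some $t_0\in(0,1)$; moreover $\phi|_{\D_-}$ is a conformal map of the lower semidisk $\D_-=\D\cap\{\Im z<0\}$ onto $\Omega_-$, and its Carath\'eodory boundary extension sends $(-1,0]$ to $(q,p(z_0)]_\Omega$, $[0,1)$ to $[p(z_0),p)_\Omega$, and hence $(-1,1)$ to $\gamma$.

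By conformal invariance of harmonic measure applied to $\phi|_{\D_-}$, the lemma reduces to the single claim
\[
\omega(-it_0,(-1,0],\D_-)=\omega(-it_0,[0,1),\D_-)=\tfrac{1}{2}\,\omega(-it_0,(-1,1),\D_-).
\]
Since $(-1,0]$ and $[0,1)$ overlap only at the single boundary point $\{0\}$, which has harmonic measure zero, finite additivity gives
\[
\omega(-it_0,(-1,0],\D_-)+\omega(-it_0,[0,1),\D_-)=\omega(-it_0,(-1,1),\D_-),
\]
so it suffices to show the two summands are equal.

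For this I would use the reflection $R(z)=-\bar z$ across the imaginary axis. The map $R$ is an anti-conformal involution of $\D_-$ that fixes the base point $-it_0$ and interchanges the boundary arcs $(-1,0]$ and $[0,1)$. Since harmonic measure is preserved by anti-conformal self-maps (the composition of a harmonic function with $R$ is again harmonic), we deduce
\[
\omega(-it_0,(-1,0],\D_-)=\omega(-it_0,R((-1,0]),\D_-)=\omega(-it_0,[0,1),\D_-),
\]
as desired. There is no real obstacle: the argument is purely a symmetry after the correct conformal normalization. The one point to be checked carefully is that $\phi|_{\D_-}$ truly matches up the three boundary arcs in $\partial\D_-$ with the corresponding pieces of $\partial_C\Omega_-$, and this was essentially done just before the lemma, where perpendicularity of $[z_0,p(z_0)]_\Omega$ to $\gamma$ at $p(z_0)$ forced $\phi(0)=p(z_0)$.
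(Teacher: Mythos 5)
Your proof is correct and takes essentially the same route as the paper: pull back to the lower semidisk $\D_-$ via the normalized map $\phi$ and invoke conformal invariance, with the symmetry of $\D_-$ about the imaginary axis doing the rest. The paper's proof is just terser, stating that conformal invariance "immediately implies" the result, where you explicitly record the reflection $z\mapsto-\bar z$ that exchanges the two boundary arcs while fixing $-it_0$.
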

\begin{proof}
As we observed above, $\phi$ maps conformally $\D_{-}$ onto $\Omega_{-}$ so that $\phi(-it_0)=z_0$ and $\phi(0)=p(z_0)$. The conformal invariance for the harmonic measure immediately implies the desired result.
\end{proof}
\begin{lemma}\label{PL4}
Let $\Omega$, $p$, $q$, $\gamma$, $z_0$, and $p(z_0)$ be as in the discussion above. Let $q_0\in\partial_C\Omega$ be the prime end such that $[z_0,p(z_0)]_{\Omega}\subset [q_0,p(z_0)]_{\Omega}$, i.e. $\hat{\phi}(-i)=q_0$. Let $\gamma_0=[q_0,p(z_0)]_{\Omega}$ and $\omega(z)=\omega\left(z,\gamma,\Omega_{-}\right)$, $z\in\Omega_{-}$. Then
\[
\nabla \omega(z_0)=\lambda\epsilon(z_0),
\]
for some $\lambda>0$, where $\epsilon(z_0)$ is the unit tangent vector of the curve $\gamma_0$ at the point $z_0$; ($\gamma_0$ has the orientation from $q_0$ to $p(z_0)$).
\end{lemma}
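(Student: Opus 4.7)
The plan is to transfer the problem to $\D_-$ via the conformal map $\phi$ of the discussion preceding the lemma, exploit the symmetry of $\D_-$ about the imaginary axis, and then transfer back by the chain rule. Set $\psi=\phi^{-1}$ and $\tilde\omega(w)=\omega(w,(-1,1),\D_-)$, so that $\omega=\tilde\omega\circ\psi$ on $\Omega_-$. A direct chain-rule computation, using that $\psi$ is holomorphic and $\tilde\omega$ is real-valued harmonic, shows that, regarded as complex numbers $g_x+ig_y$,
\[
\nabla\omega(z_0)=\nabla\tilde\omega(-it_0)\cdot\overline{\psi'(z_0)}=\nabla\tilde\omega(-it_0)\cdot\frac{\phi'(-it_0)}{|\phi'(-it_0)|^2}.
\]

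It therefore suffices to show that $\nabla\tilde\omega(-it_0)=i\mu$ for some $\mu>0$. The domain $\D_-$ and the arc $(-1,1)$ are invariant under the reflection $w\mapsto -\bar w$, hence $\tilde\omega(w)=\tilde\omega(-\bar w)$ and $\partial_x\tilde\omega$ vanishes on the imaginary axis; in particular $\nabla\tilde\omega(-it_0)$ is purely imaginary. To see that its imaginary part is strictly positive, extend $\tilde\omega-1$ by odd Schwarz reflection across $(-1,1)$ to a harmonic function on $\D$ with boundary values $\pm 1$ on the two open arcs of $\partial\D$; this extension is precisely $\tfrac{2}{\pi}\,\Im\log\tfrac{1+w}{1-w}$, and a direct differentiation gives $\partial_y\tilde\omega(-it_0)=4/(\pi(1+t_0^2))>0$. (If one prefers, this positivity can instead be extracted from a Hopf-type boundary argument combined with the strict monotonicity of $t\mapsto \tilde\omega(-it)$.)

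Finally, the oriented geodesic $\gamma_0$ is $\hat\phi([-i,0])$, whose tangent at $-it_0$ in the direction from $-i$ to $0$ is $+i$; its $\phi$-image is $i\phi'(-it_0)$, so $\epsilon(z_0)=i\phi'(-it_0)/|\phi'(-it_0)|$. Substituting $\nabla\tilde\omega(-it_0)=i\mu$ into the displayed formula gives
\[
\nabla\omega(z_0)=\frac{\mu}{|\phi'(-it_0)|}\cdot\frac{i\phi'(-it_0)}{|\phi'(-it_0)|}=\lambda\,\epsilon(z_0),
\]
with $\lambda:=\mu/|\phi'(-it_0)|>0$, as required.

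The only genuinely nontrivial step is the strict positivity of $\mu$: the symmetry argument only shows that $\nabla\tilde\omega(-it_0)$ lies along the imaginary axis, and one still has to rule out an accidental vanishing at the specific point $-it_0$. All the remaining work is conformal invariance of harmonic measure and careful bookkeeping of how a gradient transforms under a holomorphic change of coordinates.
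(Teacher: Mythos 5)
Your proof is correct and follows essentially the same route as the paper: both arguments transfer the problem to $\D_-$ via the conformal map $\phi$, use the chain rule for the gradient of $\tilde\omega\circ\phi^{-1}$, exploit the symmetry of $\D_-$ about the imaginary axis to kill the $x$-derivative, and then use positivity of the $y$-derivative. The only cosmetic difference is that you carry out the chain rule in complex notation and verify $\partial_y\tilde\omega(-it_0)>0$ by an explicit formula, whereas the paper works with real coordinates, invokes the Cauchy--Riemann equations, and simply asserts the positivity.
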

\begin{proof}
Let $g=\phi^{-1}$. By the preceding discussion, $g$ maps $\Omega_{-}$ conformally onto $\D_{-}$ so that $g(\gamma_0)$ is the line segment $[-i,0]$,  $g(z_0)=-it_0$, and $g(p(z_0))=0$. Note that the curve $\gamma_0$ can be parametrized as $\gamma_0(t)=\phi(it)$, $t\in [-1,0]$. Then $\epsilon(z_0)$ is a positive multiple of the vector $\gamma_0'(-t_0)$. We calculate
\[
\gamma_0'(-t_0)=i\phi'(-it_0)=\frac{i\overline{g'(z_0)}}{|g'(z_0)|^2}.
\]
Therefore, $\epsilon(z_0)$ is in the same direction as the vector 
\[
\delta(z_0):=\left( (\Im g)_x(z_0), (\Re g)_x(z_0)\right),
\] 
where the subscripts denote partial derivatives. By conformal invariance, for $z\in\Omega_{-}$,
\[
\omega(z)=\omega\left(g(z),(-1,1),\D_{-}\right)=v\circ g(z),
\]
where we set $v(z)=\omega\left(z,(-1,1),\D_{-}\right)$, $z\in\D_{-}$.
By the chain rule, 
\[
\omega_x(z_0)=v_x(-it_0)(\Re z)_x(z_0)+v_y(-it_0)(\Im g)_x(z_0),
\]
and
\[
\omega_y(z_0)=v_x(-it_0)(\Re g)_y(z_0)+v_y(-it_0)(\Im g)_y(z_0).
\]
By the symmetry of $\D_{-}$, it is not hard to see that $v_x(-it_0)=0$. Hence, by the Cauchy-Riemann equations, 
\[
\nabla\omega(z_0)=v_y(-it_0)\delta(z_0).
\]
Since $v_y(-it_0)>0$, the conclusion follows.
\end{proof}


\section{An example for the asymptotic behavior of the total speed}

We now proceed with several lemmas which are needed for the proof of Theorem \ref{Th3}.
We construct a convex in the positive direction simply connected domain $\Omega$ as follows. For integer $n\geq 0$, let $t_n=2^{2^n}$. For $n\geq 1$, consider the rectangles 
\[
R_n=\left\{
\begin{array}{ll}
\left( t_{n-1},t_n \right)\times \left(-t_n^{1/3},t_n^{1/3}\right), & \hbox{if}\ n\ \hbox{is\ even,}\\
\left(t_{n-1},t_n \right)\times \left(-t_n^{1/2},t_n^{1/2}\right), & \hbox{if}\ n\ \hbox{is\ odd,}\\

\end{array}
\right.
\] 
and for $n=0$, let
\[
R_0=\{z\in\C:\ -1<\Im z<1,\ \Re z<2\}.
\]
We set $\Omega=\left(\bigcup_{j\geq 0}\overline{R_j}\right)^{\circ}$. See Figure \ref{Fig1}. For two sequences of real numbers $x_n$, $y_n$, we will use the notation $x_n\gtrsim y_n$ to indicate that there exists a universal constant $c>0$ such that $x_n\geq cy_n$, for all $n\in\N$. The notation $x_n\asymp y_n$ is equivalent to $x_n\gtrsim y_n$ and $y_n\gtrsim x_n$.

\begin{lemma}\label{L1.1}
Consider the sequence
\[
a_n=\left\{
\begin{array}{ll}
\frac{1}{2}, & \hbox{if}\ n\ \hbox{is\ odd,}\\
\\
\frac{1}{3}, & \hbox{if}\ n\ \hbox{is\ even.}\\
\end{array}
\right.
\]
Then, for $n$ large, we have
\[
\rho_{\Omega}(t_{n-1},t_n)\asymp t_n^{1-a_n}.
\]

\end{lemma}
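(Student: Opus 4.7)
The plan is to bypass the hyperbolic metric and work with the quasihyperbolic distance via inequality \eqref{qh}: it suffices to show that $Q_\Omega(t_{n-1},t_n) \asymp t_n^{1-a_n}$.

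The central geometric fact is that the half-widths $t_k^{a_k}$ of the rectangles $R_k$ are strictly increasing in $k$ for $k$ large. Indeed, a direct check using $t_k = 2^{2^k}$ shows that $t_n^{a_n}/t_{n-1}^{a_{n-1}} \to \infty$ in both parity cases. Consequently, for $x$ on the real axis in $(t_{n-1},t_n)$, the only candidates for the nearest point of $\partial\Omega$ are the horizontal edges of $R_n$ (contributing distance $t_n^{a_n}$) and the inner corner $(t_{n-1},\pm t_{n-1}^{a_{n-1}})$ at which $\partial\Omega$ steps outward (contributing $\sqrt{(x-t_{n-1})^2 + t_{n-1}^{2a_{n-1}}}$). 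One therefore obtains
\[
\dist(x,\partial\Omega)=\min\!\Bigl(t_n^{a_n},\,\sqrt{(x-t_{n-1})^2+t_{n-1}^{2a_{n-1}}}\Bigr).
\]

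For the upper bound, I would test the quasihyperbolic distance on the real-axis path from $t_{n-1}$ to $t_n$ and split the integral at $x = t_{n-1}+t_n^{a_n}$. On the bulk subinterval $(t_{n-1}+t_n^{a_n},t_n)$ the distance to $\partial\Omega$ is identically $t_n^{a_n}$ and the length is $\asymp t_n$, so this contributes $\asymp t_n^{1-a_n}$. The transition subinterval $(t_{n-1},t_{n-1}+t_n^{a_n})$ gives $\mathrm{arcsinh}(t_n^{a_n}/t_{n-1}^{a_{n-1}}) = O\bigl(\log(t_n^{a_n}/t_{n-1}^{a_{n-1}})\bigr)$, which is polynomial in $n$ and hence negligible compared to the doubly-exponential $t_n^{1-a_n}$.

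For the lower bound, let $\gamma\colon[0,1]\to\Omega$ be any rectifiable curve with $\gamma(0)=t_{n-1}$ and $\gamma(1)=t_n$, and set $E=\{s:\Re\gamma(s)\in(t_{n-1},t_n)\}$. Because $R_n$ is the unique rectangle whose $x$-range covers $(t_{n-1},t_n)$, every $\gamma(s)$ with $s\in E$ lies in $R_n$; since the horizontal edges of $R_n$ are part of $\partial\Omega$, $\dist(\gamma(s),\partial\Omega)\leq t_n^{a_n}$ on $E$. The coarea formula applied to $\Re\gamma$ shows that the Euclidean length of $\gamma|_E$ is at least $t_n-t_{n-1}$, and therefore
\[
\int_\gamma\frac{|d\zeta|}{\dist(\zeta,\partial\Omega)} \;\geq\; \frac{t_n-t_{n-1}}{t_n^{a_n}} \;\asymp\; t_n^{1-a_n}.
\]
Taking the infimum over $\gamma$ and combining with \eqref{qh} yields $\rho_\Omega(t_{n-1},t_n)\gtrsim t_n^{1-a_n}$.

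The main obstacle is the careful verification that the transition region near $x=t_{n-1}$ contributes only a lower-order logarithmic term in the upper-bound integral; once the monotonicity of the widths $t_k^{a_k}$ is in place, the rest is elementary bookkeeping.
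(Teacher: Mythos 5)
Your proof is correct and follows essentially the same strategy as the paper's: reduce to the quasihyperbolic distance via \eqref{qh}, bound the integral along the real segment by splitting at $t_{n-1}+t_n^{a_n}$, and observe that the bulk interval contributes $\asymp t_n^{1-a_n}$ while the transition interval is negligible. The two small places where you deviate are both improvements in rigor or sharpness. For the lower bound, the paper asserts that the quasihyperbolic infimum is attained on the real segment (appealing implicitly to the reflection symmetry of $\Omega$); you replace this with a direct projection/coarea argument that any competing curve must traverse the full $x$-range $(t_{n-1},t_n)$ inside $R_n$, where $\dist(\cdot,\partial\Omega)\le t_n^{a_n}$ -- this avoids needing to identify the quasihyperbolic geodesic at all. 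For the transition interval, you integrate $\bigl((x-t_{n-1})^2+t_{n-1}^{2a_{n-1}}\bigr)^{-1/2}$ exactly and get $\operatorname{arcsinh}\bigl(t_n^{a_n}/t_{n-1}^{a_{n-1}}\bigr)$, a contribution of order $2^n$, which is manifestly dominated by the doubly exponential main term. The paper instead uses the cruder bound $t_n^{a_n}/t_{n-1}^{a_{n-1}}$ and then verifies the exponent inequality $a_n-\tfrac{1}{2}a_{n-1}\le 1-a_n$; your route makes that arithmetic check unnecessary. Both routes are valid and give the stated equivalence.
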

\begin{figure}
	\includegraphics[width=1\linewidth]{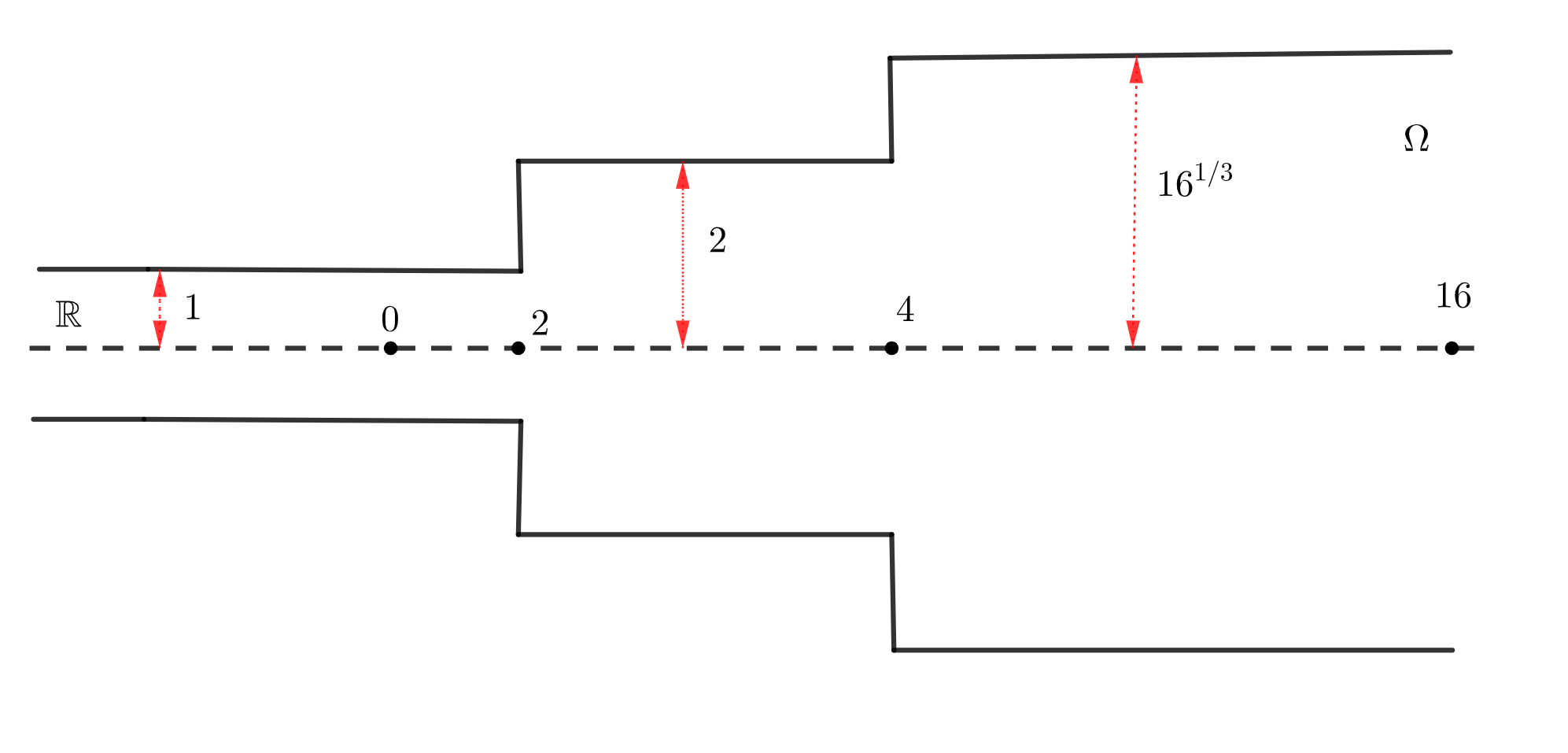}
	\caption{The domain $\Omega$.}
	\label{Fig1}	
\end{figure}
\begin{proof}
Since the domain $\Omega$ is symmetric with respect to $\R$, it follows that $\R$ is a hyperbolic geodesic for $\Omega$. In this case, the infimum for the quasihyperbolic distance, $Q_{\Omega}(a,b)$, is attained at the segment $[a,b]$, for $a,b\in\R$. Therefore, by \eqref{qh}, for $n\in\N$,
\begin{equation}\label{E1.1.1}
\rho_{\Omega}(t_{n-1},t_n)\gtrsim \int_{t_{n-1}}^{t_n}\frac{dw}{\dist(w,\partial\Omega)}\geq\frac{t_n-t_{n-1}}{t_n^{a_n}}\gtrsim t_n^{1-a_n}.
\end{equation}
The last inequality follows from the choice of the sequence $t_n$.

Moreover, if $n$ is large enough, then for $t\in [t_{n-1}+t_n^{a_n},t_n]$, we have $\dist(t,\partial\Omega)=t_n^{a_n}$. It follows, again by \eqref{qh}, that
\begin{equation}\label{E1.1.2}
\rho_{\Omega}(t_{n-1}+t_n^{a_n},t_n)\lesssim \int_{t_{n-1}+t_n^{a_n}}^{t_n}\frac{dw}{\dist(w,\partial\Omega)}=\frac{t_n-t_{n-1}-t_n^{a_n}}{t_n^{a_n}}\leq t_n^{1-a_n}.
\end{equation}
Similarly, 
$$
\rho_{\Omega}(t_{n-1},t_{n-1}+t_n^{a_n})\lesssim \frac{t_n^{a_n}}{t_{n-1}^{a_{n-1}}}.
$$
Note that $t_n=t_{n-1}^2$ and $a_n-\frac{a_{n-1}}{2}\leq 1-a_n$. Therefore, the last estimate becomes
\begin{equation}\label{E1.1.3}
\rho_{\Omega}(t_{n-1},t_{n-1}+t_n^{a_n})\lesssim t_n^{1-a_n}.
\end{equation}
By the triangle inequality, \eqref{E1.1.2}, \eqref{E1.1.3} and \eqref{E1.1.1}, we easily arrive at the desired result.
\end{proof}
\begin{lemma}\label{L1.2}
For $n$ sufficiently large, we have
\[
\rho_{\Omega}(0,t_n)\asymp \rho_{\Omega}(t_{n-1},t_n).
\]
\end{lemma}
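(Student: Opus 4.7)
The plan is to prove the equivalence by the triangle inequality, once I establish that $\rho_{\Omega}(0, t_{n-1})$ is asymptotically negligible compared to $\rho_{\Omega}(t_{n-1}, t_n)$. Indeed, the two-sided bound
$$\rho_{\Omega}(t_{n-1},t_n) - \rho_{\Omega}(0, t_{n-1}) \leq \rho_{\Omega}(0, t_n) \leq \rho_{\Omega}(t_{n-1},t_n) + \rho_{\Omega}(0, t_{n-1})$$
shows that it suffices to prove $\rho_{\Omega}(0, t_{n-1})/\rho_{\Omega}(t_{n-1}, t_n) \to 0$ as $n \to \infty$; then for $n$ large, $\rho_{\Omega}(0, t_{n-1}) \leq \tfrac{1}{2}\rho_{\Omega}(t_{n-1}, t_n)$ and both inequalities yield $\rho_{\Omega}(0,t_n) \asymp \rho_{\Omega}(t_{n-1}, t_n)$.

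To bound the numerator, I would iterate the triangle inequality,
$$\rho_{\Omega}(0, t_{n-1}) \leq \rho_{\Omega}(0, t_0) + \sum_{k=1}^{n-1} \rho_{\Omega}(t_{k-1}, t_k),$$
and apply Lemma \ref{L1.1} to each summand: for $k$ large, $\rho_{\Omega}(t_{k-1}, t_k) \lesssim t_k^{1-a_k} = 2^{2^k(1-a_k)}$, with $1-a_k \in \{1/2, 2/3\}$. Writing $e_k := 2^k(1-a_k)$, a case check on the parities of $k, k-1$ yields $e_k - e_{k-1} = 2^{k-1}\bigl(2(1-a_k) - (1-a_{k-1})\bigr) \geq 2^{k-1}/3$. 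These gaps tend to infinity, so the geometric-sum principle gives $\sum_{k=1}^{n-1} 2^{e_k} \lesssim 2^{e_{n-1}} = t_{n-1}^{1-a_{n-1}}$ for $n$ large.

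For the denominator, Lemma \ref{L1.1} gives $\rho_{\Omega}(t_{n-1}, t_n) \asymp t_n^{1-a_n} = t_{n-1}^{2(1-a_n)}$, so
$$\frac{\rho_{\Omega}(0, t_{n-1})}{\rho_{\Omega}(t_{n-1}, t_n)} \lesssim t_{n-1}^{(1-a_{n-1}) - 2(1-a_n)} = t_{n-1}^{2a_n - a_{n-1} - 1}.$$
Inspecting both parity cases shows $2a_n - a_{n-1} - 1 \in \{-1/3, -5/6\}$, i.e., the exponent is bounded above by a strictly negative constant. Since $t_{n-1} = 2^{2^{n-1}}\to\infty$ doubly exponentially, the ratio tends to $0$, completing the argument.

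The only nontrivial step — and hence the main obstacle — is showing that the sum $\sum_{k=1}^{n-1} t_k^{1-a_k}$ is dominated by its last term, together with the comparison of that last term to $t_n^{1-a_n}$. Both facts are consequences of the doubly-exponential spacing of the $t_k$ combined with the single inequality $2(1-a_n) > 1 - a_{n-1}$, which holds in both parity cases by direct check; no genuine difficulty arises beyond this bookkeeping.
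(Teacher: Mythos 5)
Your proof is correct and follows essentially the same route as the paper: reduce to showing that $\rho_{\Omega}(0,t_{n-1})$ is dominated by $\rho_{\Omega}(t_{n-1},t_n)$, expand $\rho_{\Omega}(0,t_{n-1})$ as a sum of consecutive gaps $\rho_{\Omega}(t_{j-1},t_j)$, and control each gap via Lemma~\ref{L1.1}. The only cosmetic differences are that the paper replaces the two-sided triangle inequality with the exact additivity $\rho_{\Omega}(0,t_n)=\rho_{\Omega}(0,t_{n-1})+\rho_{\Omega}(t_{n-1},t_n)$ (available because $\R$ is a geodesic of $\Omega$), and it bounds the sum more crudely by $(n-1)t_{n-1}^{2/3}$ rather than via your exponent-gap argument; your version in fact proves the sharper statement that the ratio $\rho_{\Omega}(0,t_{n-1})/\rho_{\Omega}(t_{n-1},t_n)$ tends to $0$.
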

\begin{proof}
As we have observed previously, $\R$ is a hyperbolic geodesic for $\Omega$ and thus
\[
\rho_{\Omega}(t_{n-1},t_n)<\rho_{\Omega}(0,t_n),
\]
for all $n\in\N$.
Moreover, for any $n\in\N$, 
\[
\rho_{\Omega}(0,t_n)=\rho_{\Omega}(0,t_{n-1})+\rho_{\Omega}(t_{n-1},t_n).
\]
Hence, it suffices to show that 
\begin{equation}\label{E1.2.1}
\rho_{\Omega}(0,t_{n-1})\lesssim \rho_{\Omega}(t_{n-1},t_n),
\end{equation}
for $n$ large enough.
We will prove \eqref{E1.2.1} when $n$ is odd. The case when $n$ is even can be handled in a similar manner. Observe that 
\[
\rho_{\Omega}(0,t_{n-1})=\rho_{\Omega}(0,2)+\sum_{j=1}^{n-1}\rho_{\Omega}(t_{j-1},t_j).
\]
Therefore, by Lemma \ref{L1.1}, for $n$ sufficiently large,
\begin{align*}
\rho_{\Omega}(0,t_{n-1})&\lesssim \sum_{j=1}^{n-1}t_j^{1-a_j}\leq \sum_{j=1}^{n-1}t_j^{\frac{2}{3}}\leq (n-1)t^{\frac{2}{3}}_{n-1}=(n-1)2^{\frac{2}{3}2^{n-1}}\\
& \lesssim 2^{2^{n-1}}=t_n^{1-a_n}\asymp \rho_{\Omega}(t_{n-1},t_n).
\end{align*}
This completes the proof.
\end{proof}

We can now prove Theorem \ref{Th3}, which we state here in an equivalent form.
\begin{theorem}\label{T1}
There exist a positive number $\alpha<1$ such that
\[
\limsup_{t\to +\infty}\frac{\rho_{\Omega}(0,t)}{t^\alpha}=+\infty\;\;\;\;\hbox{and}\;\;\;\;
\liminf_{t\to +\infty}\frac{\rho_{\Omega}(0,t)}{t^\alpha}=0,
\]
where $\Omega$ is the domain defined above.
\end{theorem}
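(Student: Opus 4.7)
The plan is to choose $\alpha$ strictly between $1/2$ and $2/3$, for concreteness $\alpha = 7/12$, and to evaluate the normalized distance $\rho_{\Omega}(0,t)/t^{\alpha}$ along the sparse subsequence $t = t_{n} = 2^{2^{n}}$. The construction of $\Omega$ was engineered precisely so that the thickness exponent $a_{n}$ oscillates between $1/2$ (odd $n$) and $1/3$ (even $n$); this oscillation will supply one subsequential limit equal to $0$ and another equal to $+\infty$.

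The first step is to combine Lemmas \ref{L1.1} and \ref{L1.2}, which give, for all sufficiently large $n$,
\[
\rho_{\Omega}(0,t_{n}) \;\asymp\; \rho_{\Omega}(t_{n-1},t_{n}) \;\asymp\; t_{n}^{\,1-a_{n}},
\]
with universal multiplicative constants. Splitting according to the parity of $n$ then yields $\rho_{\Omega}(0,t_{n}) \asymp t_{n}^{1/2}$ when $n$ is odd and $\rho_{\Omega}(0,t_{n}) \asymp t_{n}^{2/3}$ when $n$ is even.

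With $\alpha = 7/12$, the two subsequential ratios become
\[
\frac{\rho_{\Omega}(0,t_{n})}{t_{n}^{\alpha}} \;\asymp\; t_{n}^{1/2 - 7/12} = t_{n}^{-1/12} \;\longrightarrow\; 0 \quad\text{along odd } n,
\]
\[
\frac{\rho_{\Omega}(0,t_{n})}{t_{n}^{\alpha}} \;\asymp\; t_{n}^{2/3 - 7/12} = t_{n}^{\,1/12} \;\longrightarrow\; +\infty \quad\text{along even } n.
\]
Since $\rho_{\Omega}(0,\cdot) \geq 0$, the first line forces $\liminf_{t\to +\infty} \rho_{\Omega}(0,t)/t^{\alpha}=0$, and the second forces $\limsup_{t\to +\infty} \rho_{\Omega}(0,t)/t^{\alpha}=+\infty$. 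This proves Theorem \ref{T1}, and therefore Theorem \ref{Th3}, via the identity $v(t)=\rho_{\Omega}(0,t)$ coming from $h(0)=0$ and \eqref{i2}.

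I do not foresee a substantive obstacle here, because the hard analytic work of estimating hyperbolic distances in $\Omega$ with the correct scaling has already been carried out in Lemmas \ref{L1.1} and \ref{L1.2}. The only point that deserves care is that the constants implicit in the $\asymp$ symbols are genuinely independent of $n$; this is clear from the proofs, where the doubly exponential growth $t_{n}=t_{n-1}^{2}$ causes the telescoping sum $\sum_{j\leq n-1} t_{j}^{\,1-a_{j}}$ to be dominated, up to a universal factor, by its last term. Once that is noted, the theorem reduces to the one-line subsequential arithmetic above.
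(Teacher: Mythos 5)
Your proof is correct and follows essentially the same route as the paper: you invoke Lemmas \ref{L1.1} and \ref{L1.2} to get $\rho_{\Omega}(0,t_n)\asymp t_n^{1-a_n}$, choose $\alpha=7/12$, and read off the two subsequential limits along odd and even $n$. This matches the paper's argument step for step.
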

\begin{proof}
Let $t_n=2^{2^n}$. By Lemma \ref{L1.2} and Lemma \ref{L1.1}, for $n$ large enough, we have
\[
\rho_{\Omega}(0,t_n)\asymp \rho_{\Omega}(t_{n-1},t_n)\asymp t_n^{1-a_n}=
\left\{
\begin{array}{ll}
t_n^{\frac{1}{2}}, & \hbox{if}\ n\ \hbox{is\ odd,}\\
\\
t_n^{\frac{2}{3}}, & \hbox{if}\ n\ \hbox{is\ even.}\\
\end{array}
\right.
\]
Then it is easy to check that
\[
\limsup_{n\to +\infty} \frac{\rho_{\Omega}(0,t_n)}{t^{\frac{7}{12}}}=\lim_{n\to +\infty} t_n^{\frac{1}{12}}=+\infty,
\]
and
\[
\liminf_{n\to +\infty} \frac{\rho_{\Omega}(0,t_n)}{t^{\frac{7}{12}}}=\lim_{n\to +\infty} t_n^{-\frac{1}{12}}=0.
\]
\end{proof}


\section{The total speed is not always increasing}
In this section, we will construct a convex in the positive direction, simply connected domain $\tilde{\Omega}$ containing $\R$ such that 
\[
\rho_{\tilde{\Omega}}(0,x_n)>\rho_{\tilde{\Omega}}(0,y_n),
\]
for some sequences $x_n,\ y_n\in\R$, tending to $+\infty$ and satisfying $x_n<y_n<x_{n+1}$, for all $n$. We will begin by constructing a convenient sequence of domains $\Omega_n$, $n\geq 0$.
\begin{lemma}\label{L2.1}
There exists an absolute constant $\delta>0$ and two sequences of real numbers $a_n,\ b_n$, $n\geq 0$, tending to infinity, as $n\to +\infty$, and satisfying $a_n+b_n<a_{n+1}-b_{n+1}$, $n\geq 0$, such that if 
\[
\Omega_n=\C\setminus\bigcup_{k=0}^{n}\{z:\ \Re z\leq a_k,\ \Im z= -b_k\},\]
then
\begin{equation}\label{omgn}
\rho_{\Omega_n}(0,a_k-b_k)-\rho_{\Omega_n}(0,a_k+b_k)\geq \delta,
\end{equation}
for all $0\leq k\leq n$.
\end{lemma}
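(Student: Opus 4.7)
The plan is to reduce the desired inequality to a single-slit conformal model and then build the sequences $(a_k,b_k)$ inductively, relying on the continuity of the hyperbolic distance under Carath\'eodory convergence (Lemma \ref{PL2}) to control the perturbations introduced by additional slits.

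First, I would analyze the one-slit reference domain $D_{a,b}:=\C\setminus\{z:\Re z\le a,\ \Im z=-b\}$ with $a,b>0$. The principal branch of $\phi(z)=\sqrt{z-a+ib}$ maps $D_{a,b}$ conformally onto the right half-plane $\Pi$, giving $\phi(a-b)=b^{1/2}\cdot 2^{1/4}\,e^{3i\pi/8}$, $\phi(a+b)=b^{1/2}\cdot 2^{1/4}\,e^{i\pi/8}$, and $\phi(0)=b/(2\sqrt a)+i\sqrt a+o(1)$ as $a/b\to\infty$. Inserting these into the half-plane formula $\sinh^2\rho_\Pi(\zeta_1,\zeta_2)=|\zeta_1-\zeta_2|^2/(4\Re\zeta_1\Re\zeta_2)$ of Lemma \ref{PL1} and passing to the limit yields
\[
\rho_{D_{a,b}}(0,a-b)-\rho_{D_{a,b}}(0,a+b)\longrightarrow \tfrac12\log\!\bigl(\cos(\pi/8)/\cos(3\pi/8)\bigr)=\tfrac12\log(1+\sqrt 2).
\]
Fix $\delta:=\tfrac14\log(1+\sqrt 2)$ and an absolute constant $C_0>1$ such that $a\ge C_0 b$ implies the single-slit difference in $D_{a,b}$ exceeds $3\delta$.

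I then build $(a_k,b_k)$ by induction, taking $a_n\ge C_0 b_n$ and $a_n-b_n>a_{n-1}+b_{n-1}$ at each step, and choosing the magnitudes large enough to secure two goals. \emph{(i)} For each $k<n$, adding the $n$-th slit should change $\rho_{\Omega_{n-1}}(0,a_k\pm b_k)$ by less than $\delta/2^{n+3}$; this holds by Lemma \ref{PL2}, because as $a_n\to\infty$ with the earlier data fixed the slit $L_n$ recedes to infinity, so the Carath\'eodory kernel of $\Omega_n$ (based at $0$) coincides with $\Omega_{n-1}$. Summing these errors over all future inductive steps yields a total loss in the margin of each fixed-$k$ estimate bounded by $\delta$. \emph{(ii)} The $k=n$ estimate in $\Omega_n$ must hold with margin at least $2\delta$: the lower bound $\rho_{\Omega_n}(0,a_n-b_n)\ge \rho_{D_{a_n,b_n}}(0,a_n-b_n)$ is immediate by domain monotonicity, while the matching upper bound on $\rho_{\Omega_n}(0,a_n+b_n)$ is obtained by comparing $\Omega_n$ with a subdomain that isolates the $n$-th slit (the earlier slits lie in $\{\Re z\le a_{n-1}\}$, far from the right edge of the picture) and invoking Lemma \ref{PL2} along a rescaled family realizing the single-slit model in the limit.

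The main technical obstacle is the upper bound in (ii). Unlike in the perturbation step for earlier slits, both endpoints $0$ and $a_n+b_n$ depend on $n$ and drift to infinity, so Lemma \ref{PL2} cannot be applied on a fixed compact set but has to be used along a rescaled family. Coordinating the scaling parameters with the constraint $a_n\ge C_0 b_n$, while simultaneously preserving the perturbation bounds of (i), is the delicate point of the induction.
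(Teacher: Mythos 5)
Your single-slit computation is correct: working in the right half-plane model with $\phi(z)=\sqrt{z-a+ib}$ and letting $a/b\to\infty$, the difference $\rho_{D_{a,b}}(0,a-b)-\rho_{D_{a,b}}(0,a+b)$ does converge to $\tfrac12\log\cot(\pi/8)=\tfrac12\log(1+\sqrt2)$, and step (i) of the induction (perturbing the earlier estimates by Carath\'eodory convergence, with errors summing geometrically) is sound.

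However, step (ii) contains a genuine gap, and you correctly flag it as "the delicate point" without actually filling it. Domain monotonicity gives $\rho_{\Omega_n}(0,a_n\pm b_n)\ge\rho_{D_{a_n,b_n}}(0,a_n\pm b_n)$ — both inequalities go the same way, so for the difference this only yields a lower bound on the first term, not the needed upper bound on $\rho_{\Omega_n}(0,a_n+b_n)$. Passing to a subdomain as you suggest makes hyperbolic distance larger, which is again the wrong direction, and Lemma~\ref{PL2} cannot be applied because after rescaling by $(z-a_n)/b_n$ the base point $0$ maps to $-a_n/b_n\to-\infty$, so neither endpoint stays in a fixed compact set. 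The proposal therefore does not establish the key estimate at $k=n$.

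The paper resolves exactly this obstacle with a different mechanism. After rescaling, one shows $\rho_{\Omega_{a,b}^*}(-a/b,-1)-\rho_{\Omega_{a,b}^*}(-a/b,1)\ge\rho_{\Omega_{a,b}^*}(\zeta,-1)-\rho_{\Omega_{a,b}^*}(\zeta,1)$ for a point $\zeta$ on the fixed compact arc $K=\partial D(-i,R)\cap\{\Im z\ge0\}$, using the triangle inequality together with the fact that $\zeta$ lies on the geodesic from $-a/b$ to $-1$. That the geodesic must cross $K$ follows from J\o rgensen's theorem (hyperbolic convexity of half-planes), since the geodesic is trapped in $\{\Im z\ge0\}$. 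This reduces everything to estimates on the fixed compact $K$, where Lemma~\ref{PL2} applies uniformly, and the drifting base point drops out. This idea — comparing at a crossing point on a fixed compact arc rather than at the moving base point — is what your proposal is missing, and without it the induction does not close.
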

\begin{proof}
We will define the sequences $a_n,b_n$ inductively. We begin with the case $n=0$.

Let $\Omega_0^*=\C\setminus\{z:\ \Re z\leq 0,\ \Im z=-1\}$. Let $K=\partial D(-i,R)\cap \{z:\Im z\geq 0\}$, where $D(-i,R)$ denotes the disk centered at $-i$ of radius $R>1$. The map $\phi(z)=\sqrt{z+i}$ maps $\Omega_0^*$ conformally onto the right half plane $H$. The compact set $K$ is mapped onto a closed arc, $J$, of the circle of radius $\sqrt{R}$ centered at $0$, and the points $-1$ and $1$ are mapped to $2^{1/4}e^{3\pi i/8}$ and $2^{1/4}e^{\pi i/8}$, respectively. Fix $z\in K$ and let $w=\phi(z)\in J$. We now show that we can choose $R$ sufficiently large such that
\begin{equation}\label{omg1}
\rho_{\Omega_0^*}(z,-1)-\rho_{\Omega_0^*}(z,1)>0,
\end{equation}
for all $z\in K$.  By conformal invariance, \eqref{omg1} is equivalent to
\[
\rho_{H}(w,2^{1/4}e^{3\pi i/8})>\rho_H(w,2^{1/4}e^{\pi i/8}),
\]
which in turn, via Lemma \ref{PL1}, is equivalent to
\[
\lvert w-2^{1/4}e^{3\pi i/8}\rvert^2\lambda_H(2^{1/4}e^{3\pi i/8})>
\lvert w-2^{1/4}e^{\pi i/8}\rvert^2\lambda_H(2^{1/4}e^{\pi i/8}).
\]
Since $w$ belongs on a circle of radius $\sqrt{R}$, and we may take $R$ as large as we like, it suffices to show that
\[
\lambda_H(2^{1/4}e^{3\pi i/8})>\lambda_H(2^{1/4}e^{\pi i/8}).
\]
By \eqref{lamb}, we see that this inequality holds and thus we have proved \eqref{omg1}. Since $K$ is compact, we deduce that there is some $\eta>0$ so that
\begin{equation}\label{omg2}
\rho_{\Omega_0^*}(z,-1)-\rho_{\Omega_0^*}(z,1)\geq \eta,
\end{equation}
for all $z\in K$.

Now let $a_0>R$ and note that $-a_0\notin\overline{D(-i,R)}$. Let $\gamma$ be the hyperbolic geodesic in $\Omega_0^*$ joining $-a_0$ to $-1$. By a theorem of J\o rgensen, see \cite{Jorg}, the open half planes $\{\Im z>-\epsilon\}$ are hyperbolically convex for any $\epsilon>0$ sufficiently small. Therefore, $\gamma$ is contained in $\{\Im z\geq 0\}$ and as such, it has to meet $K$ at least at one point $\zeta$. Then, by the triangle inequality, the fact that $\gamma$ is a geodesic, and \eqref{omg2},
\[
\rho_{\Omega_0^*}(-a_0,-1)-\rho_{\Omega_0^*}(-a_0,1)\geq \rho_{\Omega_0^*}(\zeta,-1)-\rho_{\Omega_0^*}(\zeta,1)\geq \eta.
\]
Let $\Omega_0=\C\setminus\{z:\ \Re z\leq a_0,\ \Im z =-1\}$. By conformal invariance, the last estimate can be written as
\[
\rho_{\Omega_0}(0,a_0-1)-\rho_{\Omega_0}(0,a_0+1)\geq \eta.
\]

We now construct the domain $\Omega_1$. For $a,b>0$ with $a-b>a_0+1$ and $-\frac{a}{b}\notin \overline{D(-i,R)}$, let
\[
\Omega_{a,b}=\Omega_0\setminus\{z:\ \Re z\leq a,\ \Im z=-b\}.
\]
As $a,b\to +\infty$, $\Omega_{a,b}$ converges in the sense of Carath\'eodory to $\Omega_0$. Thus, by Lemma \ref{PL2}, for $a,b$ large,
\begin{equation}\label{omg3}
\rho_{\Omega_{a,b}}(0,a_0-1)-\rho_{\Omega_{a,b}}(0,a_0+1)\geq \eta-\frac{\eta}{4}.
\end{equation}
The map $\frac{z-a}{b}$ maps $\Omega_{a,b}$ onto
\[
\Omega_{a,b}^*=\Omega_0^*\setminus\left\lbrace z:\ \Re z\leq \frac{a_0-a}{b},\ \Im z =\frac{-1}{b}\right\rbrace.
\]
Observe that we may require $\frac{a}{b}\to +\infty$, as $a,b\to +\infty$ so that $\Omega_{a,b}^*$ converges in the sense of Carath\'eodory to $\Omega_0^*$.
By conformal invariance,
\begin{equation}\label{omg4}
\rho_{\Omega_{a,b}}(0,a-b)-\rho_{\Omega_{a,b}}(0,a+b)=\rho_{\Omega_{a,b}^*}\left(-\frac{a}{b},-1\right)-\rho_{\Omega_{a,b}^*}\left(-\frac{a}{b},1\right).
\end{equation}
Let $\gamma$ be the hyperbolic geodesic in $\Omega_{a,b}^*$ joining $-\frac{a}{b}$ to $-1$. By J\o rgensen's theorem, $\gamma$ meets $K$ at some point $\zeta_1$, which depends on $a,b$. Then by the triangle inequality,
\begin{equation}\label{omg5}
\rho_{\Omega_{a,b}^*}\left(-\frac{a}{b},-1\right)-\rho_{\Omega_{a,b}^*}\left(-\frac{a}{b},1\right)\geq \rho_{\Omega_{a,b}^*}\left(\zeta_1,-1\right)-\rho_{\Omega_{a,b}^*}\left(\zeta_1,1\right).
\end{equation}
By Lemma \ref{PL2}, 
\[
\sup_{w\in K}\lvert \rho_{\Omega_{a,b}^*}\left(w,\pm 1\right)-\rho_{\Omega_0^*}(w,\pm 1)\rvert\to 0,
\]
as $a,b\to +\infty$. It follows that we can choose $a,b$ sufficiently large such that
\[
\rho_{\Omega_{a,b}^*}\left(\zeta_1,-1\right)-\rho_{\Omega_{a,b}^*}(\zeta_1,1)\geq 
\rho_{\Omega_0^*}(\zeta_1,-1)-\rho_{\Omega_0^*}(\zeta_1,1)-\frac{\eta}{4},
\]
which by \eqref{omg2} implies
\[
\rho_{\Omega_{a,b}^*}\left(\zeta_1,-1\right)-\rho_{\Omega_{a,b}^*}(\zeta_1,1)\geq \eta-\frac{\eta}{4}.
\]
Finally, \eqref{omg3}, and \eqref{omg4} together with \eqref{omg5}, show that there exist $a_1,b_1$ with $a_1-b_1>a_0+1$ so that if we set $\Omega_1=\Omega_{a_1,b_1}$, then
\[
\rho_{\Omega_1}(0,a_0-1)-\rho_{\Omega_1}(0,a_0+1)\geq \eta-\frac{\eta}{4}
\]
and 
\[
\rho_{\Omega_1}(0,a_1-b_1)-\rho_{\Omega_1}(0,a_1+b_1)\geq \eta-\frac{\eta}{4}.
\]

We repeat this process, by applying the argument above to $\Omega_k$ each time, in order to produce $\Omega_{k+1}$. This way, we obtain a sequence of convex in the positive direction simply connected domains $\Omega_n$ which satisfy, 
\[
\rho_{\Omega_n}(0,a_k-b_k)-\rho_{\Omega_n}(0,a_k+b_k)\geq \eta-\sum_{j=2}^{n+1}\frac{\eta}{2^j}>\frac{\eta}{2}.
\]
for all $0\leq k\leq n$. We set $\delta=\frac{\eta}{2}$ and the proof is complete.
\end{proof}

We can now prove the following theorem, which immediately implies Theorem \ref{Th2}.
\begin{theorem}\label{T2}
There exist a convex in the positive direction, simply connected domain $\tilde{\Omega}\supset\R$ and two sequences of real numbers $x_n,y_n$, $n\geq 0$,  satisfying $x_n<y_n<x_{n+1}$ and $x_n\to +\infty$, as $n\to +\infty$, so that
\[
\rho_{\tilde{\Omega}}(0,x_n)>\rho_{\tilde{\Omega}}(0,y_n),
\]
for all $n\geq 0$.
\end{theorem}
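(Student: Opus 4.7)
The plan is to take $\tilde{\Omega}$ to be the limiting domain obtained by keeping every slit produced in Lemma \ref{L2.1} at once, and then to pass to the limit in \eqref{omgn} using Lemma \ref{PL2}.

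I would define
\[
\tilde{\Omega}:=\C\setminus\bigcup_{k=0}^{\infty}\{z:\ \Re z\leq a_k,\ \Im z=-b_k\},
\]
and first check the qualitative requirements. The real line sits inside $\tilde{\Omega}$ because each slit has imaginary part $-b_k<0$. For any $z\in\tilde{\Omega}$, the half-line $z+[0,+\infty)$ is disjoint from every slit (a horizontal leftward slit is never met by a rightward translate of a point off it), so $\tilde{\Omega}$ is convex in the positive direction. Finally, every slit accumulates at $\infty$ in $\widehat{\C}$, so $\widehat{\C}\setminus\tilde{\Omega}$ is connected and $\tilde{\Omega}$ is simply connected.

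Next, the sequence $(\Omega_n)$ is decreasing with $\bigcap_{n}\Omega_n=\tilde{\Omega}$ and $\tilde{\Omega}$ connected, from which $\Omega_n\to\tilde{\Omega}$ with respect to $0$ in the Carath\'eodory sense. Fix $k\geq 0$ and set $b^*:=\min_{j\geq 0}b_j$, which is strictly positive since $b_j>0$ and $b_j\to+\infty$. Apply Lemma \ref{PL2} with $K:=\{0,\,a_k-b_k,\,a_k+b_k\}\subset\R$ and with the simply connected cushion $K':=\{z\in\C:\Im z>-b^*\}$, which avoids every slit and therefore lies inside every $\Omega_n$ and inside $\tilde{\Omega}$. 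This yields
\[
\rho_{\Omega_n}(0,\,a_k\pm b_k)\longrightarrow\rho_{\tilde{\Omega}}(0,\,a_k\pm b_k),\qquad n\to+\infty,
\]
so letting $n\to\infty$ in \eqref{omgn} at the fixed level $k$ gives $\rho_{\tilde{\Omega}}(0,a_k-b_k)-\rho_{\tilde{\Omega}}(0,a_k+b_k)\geq\delta>0$.

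To conclude, set $x_k:=a_k-b_k$ and $y_k:=a_k+b_k$. Then $x_k<y_k$, the separation $y_k<x_{k+1}$ is exactly the condition $a_k+b_k<a_{k+1}-b_{k+1}$ supplied by Lemma \ref{L2.1}, and $x_{k+1}-x_k>2b_k\to+\infty$ forces $x_k\to+\infty$. Theorem \ref{T2} follows, and Theorem \ref{Th2} follows from it by reading off the total speed $v(t)=\rho_{\tilde{\Omega}}(0,t)$ of the semigroup whose Koenigs domain is $\tilde{\Omega}$.

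\textbf{Main obstacle.} All of the quantitative work has already been absorbed into Lemma \ref{L2.1}; the crucial feature is that the gap $\delta$ there is \emph{uniform in $n$}, so it survives the limit. What remains is a soft Carath\'eodory argument whose only delicate point is exhibiting a single simply connected cushion $K'$ that sits inside every $\Omega_n$ simultaneously and hugs the relevant portion of $\R$; the strict positivity of $\inf_j b_j$ makes this automatic.
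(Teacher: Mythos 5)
Your proposal is correct and follows essentially the same path as the paper: take $\tilde{\Omega}=\bigcap_{j\geq 0}\Omega_j$, set $x_n=a_n-b_n$, $y_n=a_n+b_n$, and pass to the limit in \eqref{omgn} via Carath\'eodory convergence and Lemma \ref{PL2}. The only place you add detail beyond the paper is in exhibiting the cushion $K'=\{\Im z>-b^*\}$ with $b^*=\min_j b_j>0$; the paper leaves this implicit, and your choice is valid since every slit lies in $\{\Im z\leq -b^*\}$, so the half-plane sits inside each $\Omega_n$ and inside $\tilde{\Omega}$.
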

\begin{figure}
	\includegraphics[width=1\linewidth]{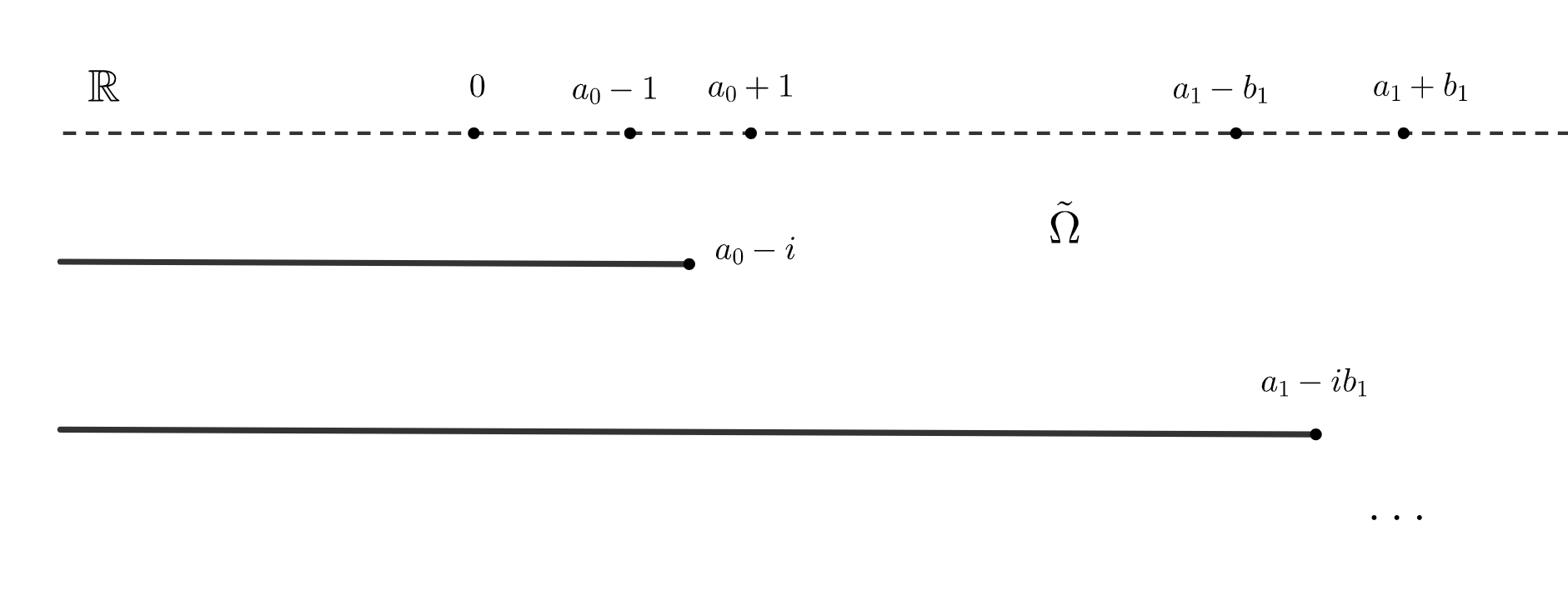}
	\caption{The domain $\tilde{\Omega}$ of Theorem \ref{T2}. The part shown coincides with $\Omega_1$.}
	\label{Fig2}	
\end{figure}
\begin{proof}
For $n\geq 0$, let $\Omega_n$, $a_n$, $b_n$, and $\delta$ be as in the statement of Lemma \ref{L2.1}. Set $\tilde{\Omega}=\bigcap_{j\geq 0}\Omega_j$ and take $x_n=a_n-b_n$, $y_n=a_n+b_n$, $n\geq 0$. Observe that $\tilde{\Omega}$ is simply connected, convex in the positive direction, and it contains the real line. See Figure \ref{Fig2}. Fix a non-negative integer $k$ and note that by Lemma \ref{L2.1},
\[
\rho_{\Omega_n}(0,x_k)-\rho_{\Omega_n}(0,y_k)\geq \delta,
\]
for all $n\geq k$. Moreover, $\tilde{\Omega}$ is the Carath\'eodory kernel of the sequence $\lbrace\Omega_n\rbrace_{n\geq 0}$. Therefore, letting $n\to +\infty$ in the last estimate and using Lemma \ref{PL2}, gives
\[
\rho_{\tilde{\Omega}}(0,x_k)-\rho_{\tilde{\Omega}}(0,y_k)\geq\delta>0.
\]
Since $k$ was arbitrary, the result follows.
\end{proof}


\section{The orthogonal speed is strictly increasing}
In this section we prove Theorem \ref{Th1}, which we restate here.
\begin{theorem}\label{T3}
Let $(\phi_t)$ be a non-elliptic semigroup in $\D$ with orthogonal speed $v^o$. The function $v^o$ is strictly increasing in $[0,+\infty)$.
\end{theorem}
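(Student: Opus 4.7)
My plan is to pass to the Koenigs domain $\Omega$: under $h$, the orbit $\{\phi_t(0):t\geq 0\}$ becomes the horizontal half-line $[0,+\infty)\subset\Omega$, and $\gamma:=h((-1,1))$ is the hyperbolic geodesic in $\Omega$ joining some prime end $q$ to $P_\infty$ through $0$. By conformal invariance of $\rho_{\mathbb D}$, one has $v^o(t)=\rho_\Omega(0,p(t))$, where $p(\cdot)$ denotes hyperbolic projection onto $\gamma$. Since $\rho_\Omega(0,\cdot)$ is strictly monotonic along $\gamma$ in the direction toward $P_\infty$, the theorem reduces to showing that $t\mapsto p(t)$, starting at $p(0)=0$, moves strictly toward $P_\infty$ along $\gamma$ as $t$ increases in $[0,+\infty)$.

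I would argue by contradiction. If the projection failed to advance strictly, there would exist $0\leq s_1<s_2$ with $p(s_1)=p(s_2)=:p^*$; in particular $s_1$ and $s_2$ both lie on the perpendicular geodesic $\ell$ to $\gamma$ at $p^*$. By Lemma \ref{PL3}, $p^*$ is the unique point of $\gamma$ that splits the harmonic measure $\omega(\cdot,\gamma,\Omega_\pm)$ (from whichever side $\Omega_\pm$ of $\Omega\setminus\gamma$ contains the basepoint) into two halves of equal measure. The plan is to show this equisplit condition cannot hold simultaneously at $s_1$ and $s_2$: as $z$ moves rightward from $s_1$ to $s_2$, the harmonic measure of the $P_\infty$-side arc $[p^*,P_\infty)_\gamma$ strictly increases while that of the complementary arc $[q,p^*]_\gamma$ strictly decreases, forcing the equisplit point to advance strictly toward $P_\infty$.

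The main technical step, which I anticipate to be the chief obstacle, is establishing this strict monotonicity of the harmonic-measure weights along the orbit. My plan is to use Lemma \ref{PL4} to locate the gradient of $\omega(\cdot,\gamma,\Omega_\pm)$ along the perpendicular-geodesic direction at the basepoint (oriented toward $\gamma$), and then to invoke the reflection inequality Lemma \ref{RL}(b) after a conformal change of variables straightening $\gamma$ onto a real segment (for example, via $H\circ h^{-1}$ with $H(z)=\tfrac{2}{\pi}\log\tfrac{1+z}{1-z}$, which maps $\Omega$ onto the strip $\{|\Im z|<1\}$ and $\gamma$ onto $\mathbb R$). The convexity of $\Omega$ in the positive direction enters exactly at this juncture: it furnishes the containment hypothesis needed in Lemma \ref{RL}, so that the reflection across the straightened $\gamma$ yields the desired strict comparison.

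A subtlety requiring care is the degenerate case in which $\frac{d}{dt}p(t)$ vanishes (for instance at $t=0$ when the infinitesimal generator satisfies $\Re G(0)=0$, which by the minimum principle forces $\Omega$ to be a half-plane and the semigroup to be parabolic); strict monotonicity at such isolated points is recovered from a direct Möbius computation together with the real-analyticity of $v^o$ on $(0,+\infty)$.
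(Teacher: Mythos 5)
Your outline correctly identifies the transfer to the Koenigs domain, the equisplit characterization from Lemma \ref{PL3}, and the relevance of Lemmas \ref{PL4} and \ref{RL}, but the central technical step is misconceived. Straightening $\gamma$ onto $\mathbb{R}$ via $H\circ h^{-1}$ maps $\Omega$ onto the strip $\{|\Im z|<1\}$, which is \emph{symmetric} under reflection in $\mathbb{R}$; reflecting across ``the straightened $\gamma$'' is then an automorphism of the image domain and yields no comparison at all. Moreover, convexity in the positive direction does \emph{not} survive this map in any form that furnishes the containment hypothesis $G^*\subset D$ of Lemma \ref{RL}. In the paper's proof the reflection is taken across the \emph{orbit} $[0,+\infty)\subset\mathbb{R}$, working inside $\Omega$ itself, and it is precisely convexity in the positive direction of $\Omega$ that produces $G^*\subset D$ for the relevant subdomains $D$ (the Jordan domain bounded by a real interval $[a,b]$ and the subarc $\widehat{\gamma}$ of $\gamma$ lying over it) and $G$ (a component of $D^*\cap\Omega_{-}$). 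Reflecting across the orbit, not across $\gamma$, is where the hypothesis on $\Omega$ actually enters the argument, and your plan loses this.

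There are secondary gaps as well. The assertion that ``the harmonic measure of the $P_\infty$-side arc strictly increases as $z$ moves rightward'' does not follow from Lemma \ref{PL4} plus Hopf's lemma, and is not even uniformly meaningful along the whole orbit, since the orbit may cross $\gamma$ many times and the side $\Omega_\pm$ in which the measure is computed switches. The paper instead runs a case analysis: it first rules out (Case 1, via the strong Markov property and Lemma \ref{RL}) the possibility that $p(t)$ escapes the subarc $\widehat{\gamma}[t]$ determined by the nearest crossings of the orbit with $\gamma$; then, when $t_1$ and $t_2$ project into the same subarc, Lemma \ref{PL4} and Hopf's lemma are used only to show that each geodesic $(t_i,p(t_i))_\Omega$ enters $D$, so that the two perpendicular geodesics become crosscuts of the Jordan domain $D$ with interlaced endpoints on $\partial D$, forcing an intersection --- impossible for distinct geodesics orthogonal to $\gamma$. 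Your ``equisplit point advances'' heuristic would need to be replaced by such a crosscut argument, and the escaping case handled separately. Finally, your contradiction hypothesis $p(s_1)=p(s_2)$ omits the possibility that $p$ moves strictly backward; the paper reduces the conclusion to strict monotonicity of $\Re p(t)$ using the result from \cite{Bets} that each vertical line meets $\gamma$ at most once, a reduction your proposal does not make.
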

\begin{proof}
Let $\tau$ be the Denjoy-Wolff point of $(\phi_t)$. Let $h:\D\to\C$ be the Koenigs function of the semigroup with $h(0)=0$ and let $\Omega=h(\D)$ be the Koenigs domain. Let $P_{\infty}=\hat{h}(\tau)$, where $\hat{h}$ is the Carath\'eodory extension of $h$, and let $\gamma=h((-\tau,\tau))$. Then $P_{\infty}$ is the prime end of $\Omega$ to which the slit $[0,\infty)$ converges, and $\gamma$ is the hyperbolic geodesic for $\Omega$ joining $P_{\infty}$ to some prime end $q\in\partial_C\Omega$ and containing the point $0$. By conformal invariance,
\[
v^o(t)=\rho_{\D}(0,\pi(\phi_t(0)))=\rho_{\Omega}(0,p(t)),\ t\geq 0,
\]
where $p(t)$ is the point of $\gamma$ with minimal hyperbolic distance from the point $t\geq 0$. If $\Omega$ is a horizontal half plane, then it is easy to check that $v^o$ is a strictly increasing function. Therefore, we assume for the rest of the proof that $\Omega$ is not a horizontal half plane.  It is known, see \cite[Theorem 3]{Bets}, that each vertical line intersects $\gamma$ once, at most.

\begin{figure}
	\includegraphics[width=1\linewidth]{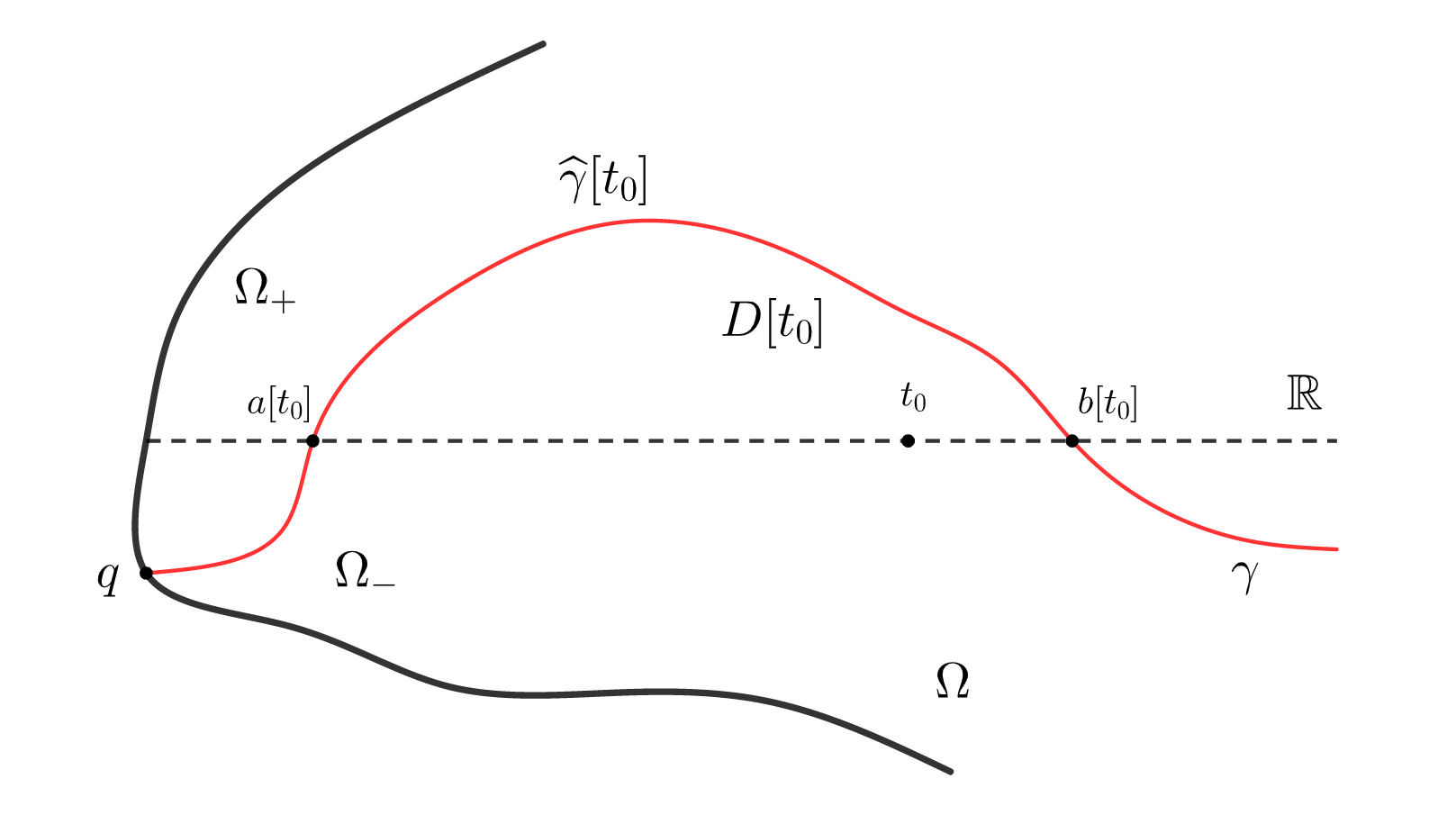}
	\caption{The red curve is $\gamma$. It separates $\Omega$ into $\Omega_{+}$ and $\Omega_{-}$. The part of the red curve joining $a[t_0]$ to $b[t_0]$ is $\widehat{\gamma}[t_0]$. Note that, in general, $q$ is a prime end of $\Omega$.}
	\label{Fig3}	
\end{figure}

If $\Omega$ is symmetric with respect to the real axis, then $\gamma$ contains the positive semiaxis and therefore,
for every $t\geq 0,$ $p(t)=t$. It follows that $v^o$ is strictly increasing. Thus,  we assume that  
$\Omega$ is not  symmetric with respect to  the real axis. Then the analytic curve $\gamma$ intersects $[0,\infty)$ at a finite or countably infinite number of points and the set $\gamma\cap [0,\infty)$ does not have a finite accumulation point. We now make some geometric considerations and set some notation. Let $\Omega_{-}$, $\Omega_{+}$ be the two subdomains of $\Omega$ determined by $\gamma$, i.e., the images, under $h$, of the two semidisks determined by the diameter $(-\tau,\tau)$ in $\mathbb{D}$. Suppose that $t_o\in (0,\infty)$ and $t_o\notin\gamma$. Let
$$
a[t_o]:=\max ([0,t_o)\cap\gamma)
$$
and
$$
b[t_o]:=\begin{cases} \min((t_o,\infty)\cap\gamma),\;\;\;\;&\hbox{if}\;\;\;\;(t_o,\infty)\cap\gamma\neq\varnothing,
\\
P_\infty, &\hbox{if} \;\;\;\;(t_o,\infty)\cap\gamma=\varnothing.
\end{cases}
$$
Consider the geodesic segment (subset of $\gamma$), $\widehat{\gamma}[t_o]:=(a[t_o],b[t_o])_\Omega$
and the simply connected domain $D[t_o]$ with
$$
\partial D[t_o]=\widehat{\gamma}[t_o]\cup [a[t_o],b[t_o]].
$$
Note that, since  $\Omega$ is  convex in the positive direction, $D[t_o]\subset \Omega$. Moreover, either $D[t_o]\subset \Omega_-$ or $D[t_o]\subset \Omega_+$. See Figure \ref{Fig3}. If there is no danger of confusion, we will use the simpler pieces of notation $a,b, \widehat{\gamma}, D$ instead of   $a[t_o],b[t_o], \widehat{\gamma}[t_o], D[t_o]$. Sometimes, in addition to the hyperbolic segment $(a,b)_\Omega$, we will use the Euclidean segment (interval) $(a,b)$. In this case, if $b=P_\infty$ then by $(a,b)$ we denote the interval $(a,\infty)$. 

 Since each vertical line intersects $\gamma$ once, at most, we infer that $v^o$ is strictly increasing if and only if the function $\Re p(t)$ is strictly increasing in $[0,+\infty)$. 
 For the sake of contradiction, we assume that there exist $t_1,t_2$ such that
\begin{equation}\label{Assumption}
0\leq t_1<t_2 \;\;\;\hbox{and}\;\;\;
\Re p(t_1)\geq \Re p(t_2). 
\end{equation}
We now consider the following cases.

\medskip

{\bf Case 1:} $t_1\notin \gamma$ and $p(t_1)\notin \widehat{\gamma}[t_1]$. See Figure \ref{Fig4}.\\
We are going to show that Case 1 cannot occur. Since $p(t_1)\notin \widehat{\gamma}[t_1]$, we have that either $p(t_1)\in (q,a[t_1])_\Omega$ or
$p(t_1)\in (b[t_1],P_\infty)_\Omega$. We assume that $p(t_1)\in (b[t_1],P_\infty)_\Omega$; the other case is similar. We further assume that $t_1\in\Omega_-$; the case $t_1\in \Omega_+$ is treated in a similar manner.

\medskip

We will use the notation $a,b, \widehat{\gamma}, D$ for  $a[t_1],b[t_1], \widehat{\gamma}[t_1], D[t_1]$.
Consider the domain $\widetilde{\Omega}$ which is the component of $(D\cup (a,b)\cup D^*)\cap\Omega$ 
containing $t_1$. Recall that the superscript $*$ denotes reflection in the real line. Note that 
$\widetilde{\Omega}$ has the following properties:\\
(i) It is simply connected,\\
(ii) $\widetilde{\Omega}\subset\Omega_-$,\\
(iii) $(\widetilde{\Omega}\cap \{z:\Im z<0\})^*=(D^*\cap \Omega_-)^*\subset D=\widetilde{\Omega}\cap \{z:\Im z>0\}$.\\
Note also that the hyperbolic segments $[p(t_1),P_\infty)_\Omega$ and  $\widehat{\gamma}$ are disjoint.

 By the Strong Markov Property,
\begin{eqnarray}\label{m3p1}
\omega(t_1,[p(t_1),P_\infty)_\Omega,\Omega_-)&=&\int_{\widehat{\gamma}^*\cap\partial \widetilde{\Omega}} \omega(t_1,d\zeta,\widetilde{\Omega})\; \omega(\zeta,[p(t_1),P_\infty)_\Omega,\Omega_-) \nonumber \\
&<& \omega(t_1,\widehat{\gamma}^*\cap\partial \widetilde{\Omega},\widetilde{\Omega}).
\end{eqnarray}
The inequality is strict because for every $\zeta\in \widehat{\gamma}^*\cap\partial \widetilde{\Omega}$, 
$$
\omega(\zeta,[p(t_1),P_\infty)_\Omega,\Omega_-)<1.
$$ 
Also, because of  the property (iii), we may apply Lemma \ref{RL}(b) to conclude that for $z\in (a,b)\cup(D^*\cap \widetilde{\Omega})$,
\begin{equation}\label{m3p2}
\omega(z,\widehat{\gamma}^*\cap\partial \widetilde{\Omega}, \widetilde{\Omega})\leq 
\omega(\bar{z},\widehat{\gamma}, \widetilde{\Omega}).
\end{equation}
In particular,
\begin{equation}\label{m3p3}
\omega(t_1,\widehat{\gamma}^*\cap\partial \widetilde{\Omega}, \widetilde{\Omega})\leq 
\omega(t_1,\widehat{\gamma}, \widetilde{\Omega}).
\end{equation}
By the domain monotonicity of harmonic measure,
\begin{eqnarray}\label{m3p4}
\omega(t_1,\widehat{\gamma}, \widetilde{\Omega})&\leq & \omega(t_1,\widehat{\gamma}, \Omega_-) 
< \omega(t_1,(q,p(t_1)]_\Omega, \Omega_-)\\ &=&\omega(t_1,[p(t_1),P_\infty)_\Omega, \Omega_-),\nonumber
\end{eqnarray}
where the last equality follows from Lemma \ref{PL3}. 
Combining (\ref{m3p1}), (\ref{m3p3}), and (\ref{m3p4}), we arrive at a contradiction. Thus, as we claimed, Case 1 is impossible. 
\begin{figure}
	\includegraphics[width=1\linewidth]{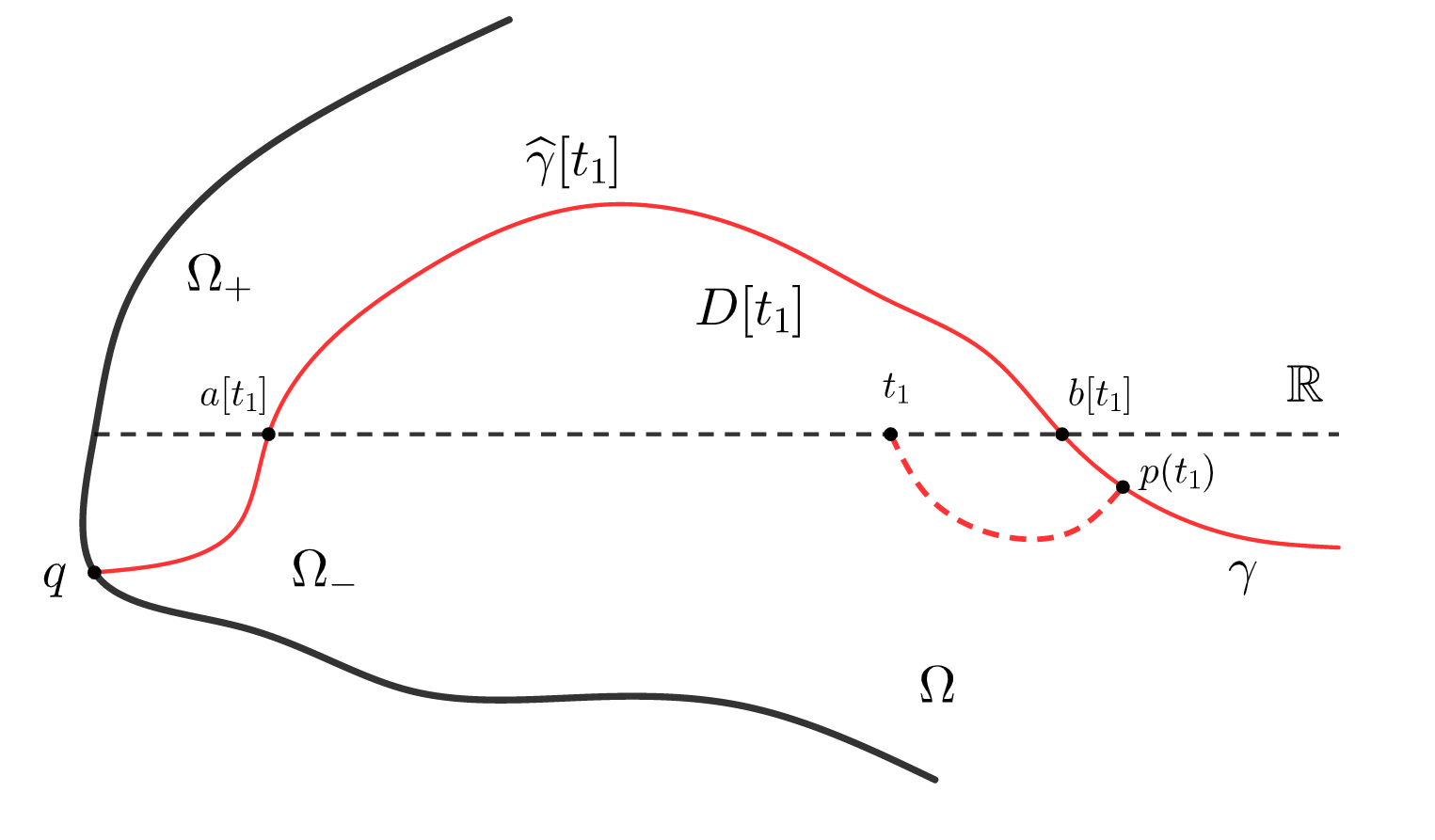}
	\caption{The situation in Case 1; $t_1\in\Omega_{-}$ and $p(t_1)\in \left(b[t_1], P_{\infty}\right)_{\Omega}$. The dotted red arc is the geodesic segment in $\Omega$ joining $t_1$ to $p(t_1)$ and it is orthogonal to $\gamma$.}
	\label{Fig4}	
\end{figure}
\medskip

{\bf Case 2:} $t_1\notin \gamma$ and  $p(t_1)\in \widehat{\gamma}[t_1]$. \\
We further assume that $t_1\in \Omega_-$; the case $t_1\in\Omega_+$ is similar.

\medskip

{\bf Subcase 2.1:} $t_2\in\gamma$.\\
Then $t_2=p(t_2)$. Since $p(t_1)\in  \widehat{\gamma}[t_1]$, we conclude that $\Re p(t_1)<\Re p(t_2)$ which contradicts (\ref{Assumption}). 

\medskip

{\bf Subcase 2.2:} $t_2\notin\gamma$ and $p(t_2)\notin \widehat{\gamma}[t_2]$.\\
We arrive at a contradiction using the same argument as in Case 1 (with $t_2$ playing now the role of $t_1$). 

\medskip

{\bf Subcase 2.3:} $t_2\notin\gamma$ and $p(t_2)\in \widehat{\gamma}[t_2]$.\\
We claim that $\widehat{\gamma}[t_1]= \widehat{\gamma}[t_2]$. Indeed, suppose that 
 $\widehat{\gamma}[t_1]\neq\widehat{\gamma}[t_2]$. Then  $\widehat{\gamma}[t_1]\cap\widehat{\gamma}[t_2]=\varnothing$ and hence
$$
\Re p(t_1)<b[t_1]\leq a[t_2]<\Re p(t_2) 
$$
which contradicts (\ref{Assumption}). So we have $\widehat{\gamma}[t_1]= \widehat{\gamma}[t_2]$. We set
$a=a[t_1]=a[t_2]$, $b=b[t_1]=b[t_2]$, $\widehat{\gamma}=\widehat{\gamma}[t_1]=\widehat{\gamma}[t_2]$, $D=D[t_1]=D[t_2]$. 

Consider the geodesic curves (for $\Omega$) $\gamma_1,\gamma_2$ that are orthogonal to $\gamma$ and $t_1\in\gamma_1$, $t_2\in \gamma_2$. Note that either $\gamma_1\cap\gamma_2=\varnothing$ or $\gamma_1=\gamma_2$. We first deal with the former possibility:
\begin{figure}
	\includegraphics[width=1\linewidth]{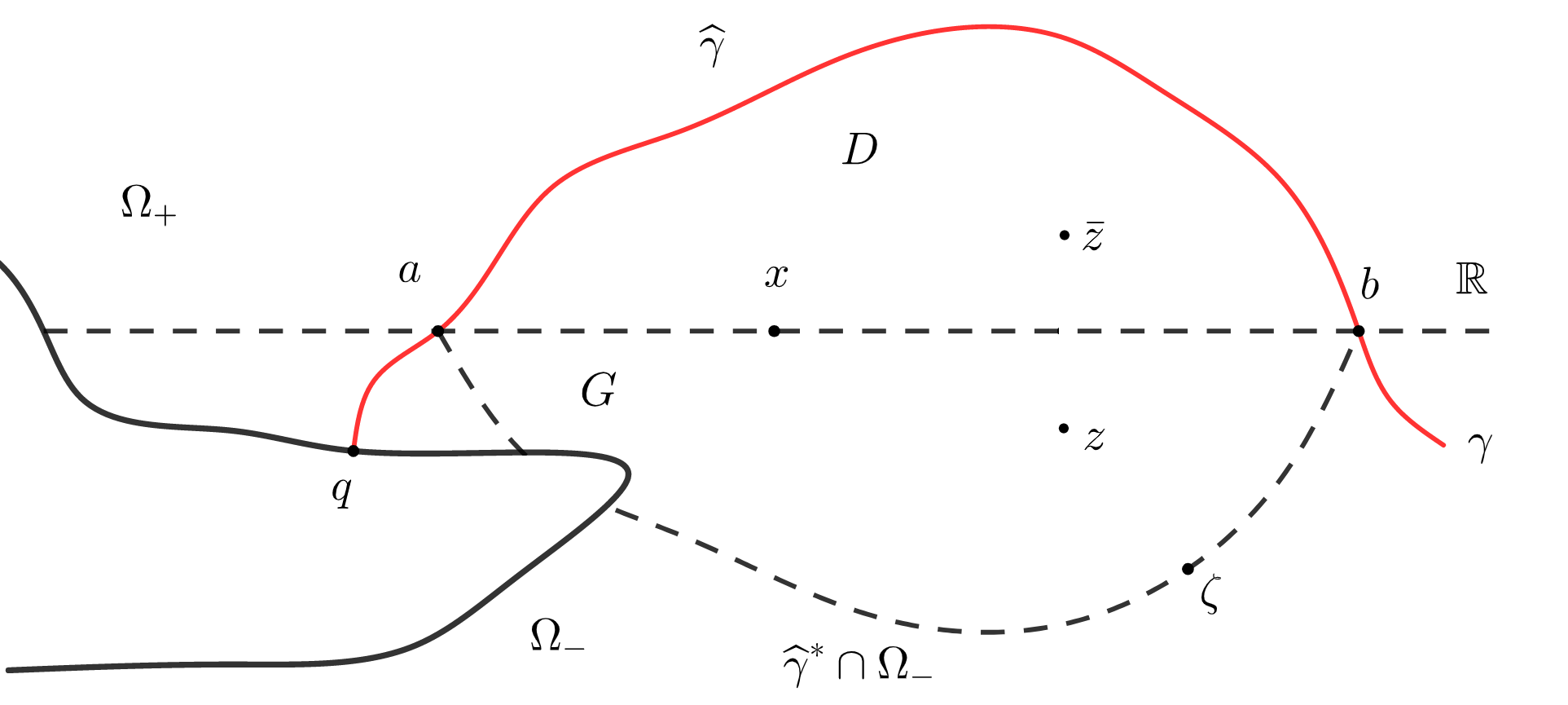}
	\caption{The union of the two dotted arcs in the lower half plane is the set $\widehat{\gamma}^*\cap\Omega_{-}$.}
	\label{Fig5}	
\end{figure}
\medskip

{\bf Subsubcase 2.3.A:} $\gamma_1\cap\gamma_2=\varnothing$.

\noindent
 Consider the function
\begin{equation}\label{m3p10}
\omega(z):=\omega(z,\gamma,\Omega_-),\;\;\;z=x+iy\in\Omega_-.
\end{equation}
Let $G$ be the component of $D^*\cap\Omega_{-}$ having $t_1$ on its boundary. See Figure \ref{Fig5}. Note that since $\Omega$ is convex in the positive direction, the domain $G$ is simply connected.
By the strong Markov property, for every $z\in G$,
\begin{equation}\label{m3p13}
\omega(\bar{z})=\omega(\bar{z}, \widehat{\gamma},D)+\int_{(a,b)}\omega(\bar{z},dx,D)\;\omega(x)
\end{equation}
and
\begin{equation}\label{m3p14}
\omega(z)=\int_{\widehat{\gamma}^*\cap \partial G}\omega(z, d\zeta,G)\,\omega(\zeta)  +\int_{(a,b)}\omega(z,dx,G)\;\omega(x).
\end{equation}
\begin{figure}
	\includegraphics[width=1\linewidth]{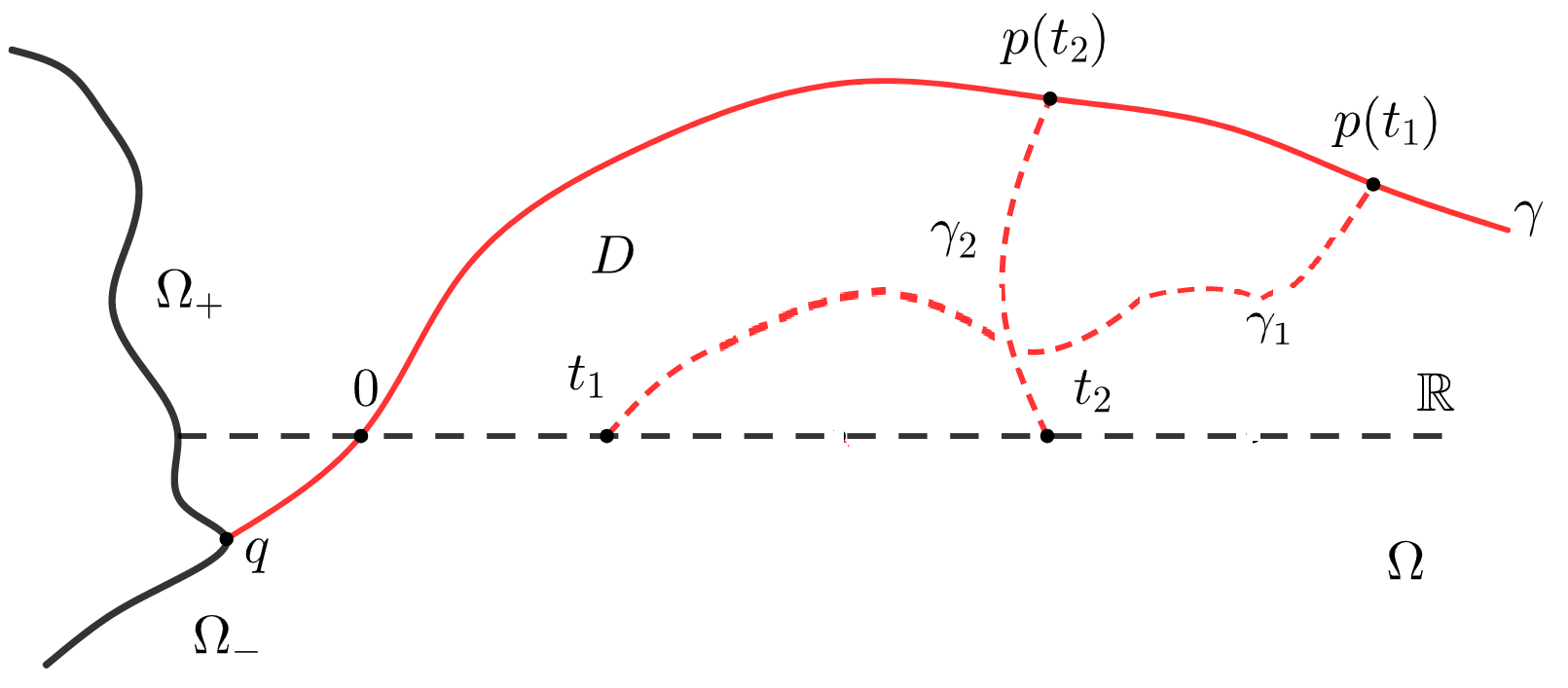}
	\caption{The situation in Subsubcase 2.3.A; $t_1\in\Omega_{-}$. $t_1,\ t_2\notin\gamma$, $p(t_1),\ p(t_2)\in \widehat{\gamma}[t_1]=\widehat{\gamma}[t_2]$.}
	\label{Fig6}	
\end{figure}

 Since $G^*\subset D$, we may apply Lemma \ref{RL}(a) to conclude that for every Borel set $B\subset (a,b)$ and every $z\in G$,
\begin{equation}\label{m3p15}
\omega(z, B,G)\leq \omega(\bar{z}, B,D).
\end{equation}
Also, by Lemma \ref{RL}(c), for every $z\in G$,
\begin{equation}\label{m3p16}
\int_{\widehat{\gamma}^*\cap \partial G}\omega(z, d\zeta,G)\,\omega(\zeta)< 
\omega(z, \widehat{\gamma}^*\cap \partial G, G) 
\leq  \omega(\bar{z}, \widehat{\gamma}, D).
\end{equation}
The first inequality in (\ref{m3p16}) is strict because $\omega(\zeta)<1$ for every $\zeta\in \widehat{\gamma}^*\cap \partial G$.
It follows from (\ref{m3p13})-(\ref{m3p16}) that for every $z\in G$, we have
\begin{equation}\label{m3p17}
\omega(z)<\omega(\bar{z}).
\end{equation}

Consider the function $u(z)=\omega(z)-\omega(\bar{z}), \;\;z\in G$. This is a harmonic function in $G$ and, by (\ref{m3p17}), $u<0$ in $G$. Moreover, $u$ extends continuously on $(a,b)$ and $u(t)=0$ for every $t\in (a,b)$. By Hopf's lemma (see e.g. \cite[Lemma 3.4]{GT}),
$u_y(t)>0$ for every $t\in (a,b)$. This means that 
\begin{equation}\label{m3p20}
\omega_y(t)>0,\;\;\;\;t\in (a,b).
\end{equation}

\medskip

Let $\epsilon(\zeta)=(\epsilon_1(\zeta),\epsilon_2(\zeta))$ be the unit tangent vector of $\gamma_1$ at $\zeta\in\gamma_1$; ($\gamma_1$ is assumed to have the orientation from $t_1$ to $p(t_1)$). By Lemma \ref{PL4},
\begin{equation}\label{m3p11}
\nabla \omega(\zeta)=(\omega_x(\zeta),\omega_y(\zeta))=\lambda\;\epsilon(\zeta),\;\;\;\lambda>0,\;\zeta\in [t_1,p(t_1))_\Omega.
\end{equation}
By  (\ref{m3p20}), $\epsilon_2(t)>0$, for every $t\in (a,b)\cap [t_1,p(t_1)]_\Omega$. 
It follows that $(t_1, p(t_1))_\Omega\subset D$ and similarly $(t_2, p(t_2))_\Omega\subset D$. 
Therefore,  $[t_1, p(t_1)]_\Omega$ and  $[t_2, p(t_2)]_\Omega$ are crosscuts of $D$. Note that $D$ is a Jordan domain and that the points $t_1, t_2, p(t_1), p(t_2)$ lie on the Jordan curve $\partial D$ in this order. See Figure \ref{Fig6}. The crosscut  $[t_1, p(t_1)]_\Omega$ divides $D$ into two disjoint Jordan domains $D_1$, $D_2$ such that $\partial D_1\cap\partial D_2=[t_1,p(t_1)]_\Omega$, $p(t_2)\in \partial D_1\setminus [t_1,p(t_1)]_\Omega$ and $t_2\in \partial D_2\setminus [t_1,p(t_1)]_\Omega$.
Thus the crosscut $[t_2,p(t_2)]_\Omega$ intersects $[t_1,p(t_1)]_\Omega$. This cannot happen because $\gamma_1\cap\gamma_2=\varnothing$.


\medskip

{\bf Subsubcase 2.3.B:} $\gamma_1=\gamma_2$. See Figure \ref{Fig7}.

\begin{figure}
	\includegraphics[width=1\linewidth]{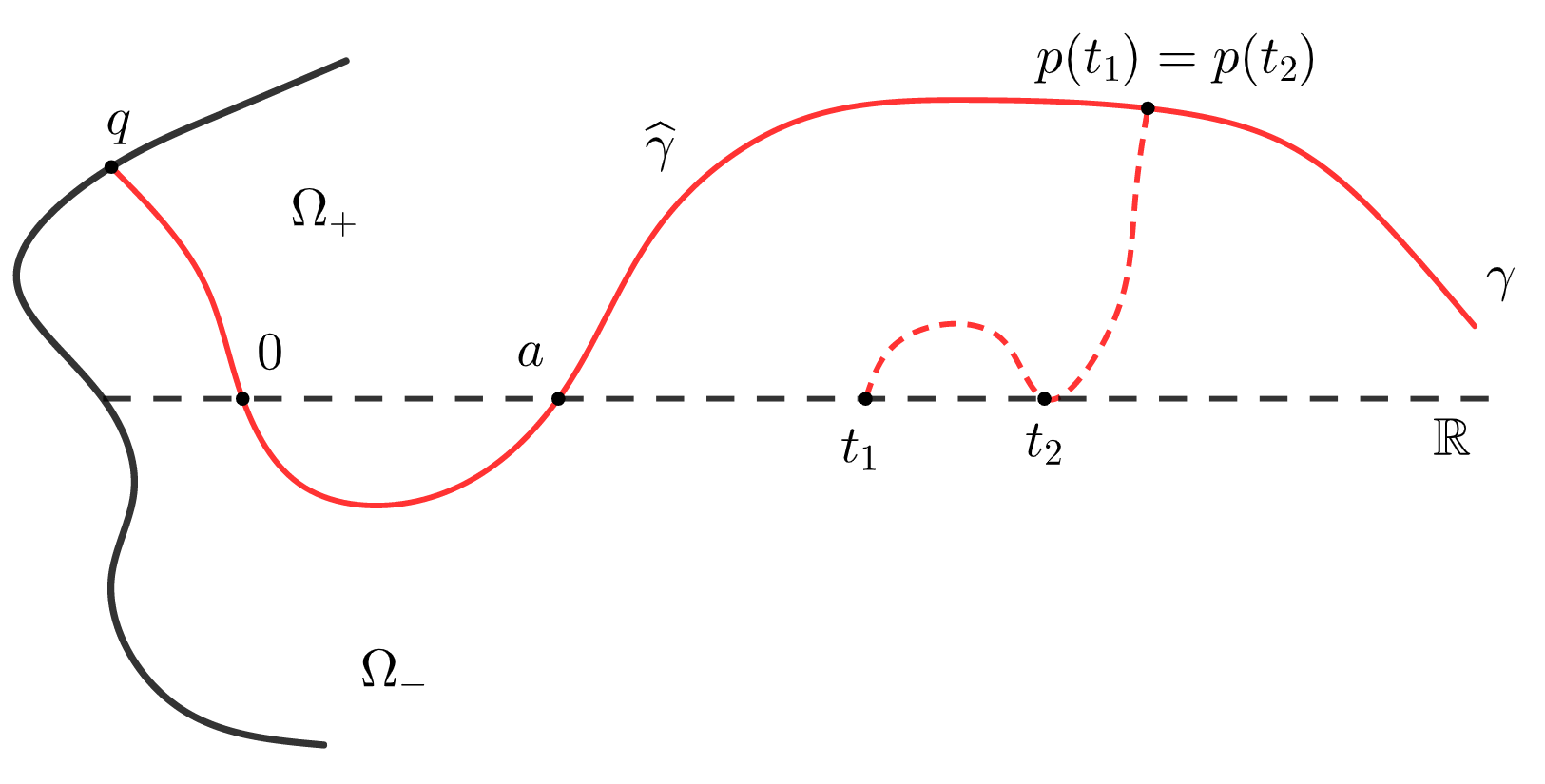}
	\caption{The situation in subsubcase 2.3.B; $\gamma_1=\gamma_2$ and $p(t_1)=p(t_2)\in \widehat{\gamma}$.}
	\label{Fig7}	
\end{figure}

\noindent
In this situation, we have $p(t_1)=p(t_2)$ and either $[t_1,p(t_1)]_\Omega\subset [t_2,p(t_2)]_\Omega$ or 
$[t_2,p(t_2)]_\Omega\subset [t_1,p(t_1)]_\Omega$. We assume that $[t_2,p(t_2)]_\Omega\subset [t_1,p(t_1)]_\Omega$; the other possibility is treated similarly. As we saw in Subsubcase 2.3.A.,  $[t_1,p(t_1)]_\Omega$ lies in  $\{z:\Im z\geq 0\}$.

Let $\epsilon(\zeta)=(\epsilon_1(\zeta),\epsilon_2(\zeta))$ be the unit tangent vector of $\gamma_1$ at $\zeta\in\gamma_1$; ($\gamma_1$ is assumed to have the orientation from $t_1$ to $p(t_1)$). Since the analytic arc $[t_1,p(t_1)]_\Omega$ lies in  $\{z:\Im z\geq 0\}$ and contains the point $t_2\in \R$, we have $\epsilon_2(t_2)=0$. As we saw in Subsubcase 2.3.A (with the argument with the harmonic measure and Hopf's lemma), this cannot happen.

\medskip

{\bf Case 3:} $t_1\in\gamma$.\\
Then $t_1=p(t_1)$. 

\medskip

{\bf Subcase 3.1:} $t_2\in\gamma$.\\
Then $t_2=p(t_2)$. Therefore, $\Re p(t_1)<\Re p(t_2)$ which contradicts (\ref{Assumption}).

\medskip

{\bf Subcase 3.2:} $t_2\notin\gamma$.\\
If $p(t_2)\notin \widehat{\gamma}[t_2]$, then this subcase coincides with Subcase 2.2. 
If $p(t_2)\in \widehat{\gamma}[t_2]$, then 
$$
\Re p(t_1)=p(t_1)=t_1\leq a[t_2]<\Re p(t_2)
$$
which contradicts  (\ref{Assumption}).
\end{proof}

\section{A domain monotonicity property of the orthogonal speed}

In this section we will prove Theorem \ref{Th4} which we restate.
\begin{theorem}
Suppose that $(\phi_t)$, $(\widetilde{\phi}_t)$ are 
semigroups with Denjoy-Wolff points $\tau,\widetilde{\tau}\in\partial \mathbb D$, Koenigs domains $\Omega,\widetilde{\Omega}$, and orthogonal speeds 
$v^o, \widetilde{v}^o$, respectively. If $\Omega\subset\widetilde{\Omega}$, then
\begin{equation}\label{ls1}
\liminf_{t\to\infty}[v^o(t)-\widetilde{v}^o(t)]\geq -\log 2.
\end{equation} 
\end{theorem}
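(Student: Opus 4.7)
My plan is to translate the orthogonal speeds into a Euclidean comparison between the two orbits and then exploit the inclusion $\Omega \subset \widetilde\Omega$ via harmonic-measure estimates. After normalizing both Koenigs disk models so that $\tau = \widetilde\tau = 1$ and applying the Cayley map $\psi(z) = (1+z)/(1-z)$, one has
\[
e^{2v^o(t)} = \frac{1+x_t}{1-x_t} = |\psi(\phi_t(0))| = \frac{|1+\phi_t(0)|}{|1-\phi_t(0)|},
\]
with $x_t = \pi(\phi_t(0))$, and an analogous identity for $\widetilde v^o(t)$. Since $\phi_t(0), \widetilde\phi_t(0) \to 1$, the asserted bound $\liminf_{t\to\infty}[v^o(t) - \widetilde v^o(t)] \geq -\log 2$ reduces to
\[
\limsup_{t\to\infty} \frac{|1-\phi_t(0)|}{|1-\widetilde\phi_t(0)|} \leq 4,
\]
the factor $4$ being the source of the $\log 2$ on the hyperbolic side (since $e^{2(\widetilde v^o - v^o)} \asymp |1-\phi_t(0)|/|1-\widetilde\phi_t(0)|$).

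Next, I would encode these Euclidean quantities as harmonic measures in the two Koenigs domains. For an arc $A_\delta\subset\partial\D$ of angular half-width $\delta$ centered at $1$, one has $\omega(0, A_\delta, \D) = \delta/\pi$; transporting through $h$ produces a prime-end arc $\hat h(A_\delta) \subset \partial_C\Omega$ with the same harmonic measure from $0$. Choosing $\delta = \delta(t)$ with $\delta(t) \asymp |1-\phi_t(0)|$ (so that $\omega(\phi_t(0), A_{\delta(t)}, \D)$ is of order $1$) gives $\omega(0, \hat h(A_{\delta(t)}), \Omega) \asymp |1-\phi_t(0)|$, and a parallel construction yields $\widetilde A_t \subset \partial_C\widetilde\Omega$ with $\omega(0, \widetilde A_t, \widetilde\Omega) \asymp |1-\widetilde\phi_t(0)|$. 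The inequality to be proved becomes a comparison between these two harmonic measures.

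The comparison is the technical core, and I would derive it from the strong Markov property \eqref{h11} applied to the pair $\Omega \subset \widetilde\Omega$:
\[
\omega(0, \widetilde A_t, \widetilde\Omega) = \omega(0, \widetilde A_t \cap \partial\Omega, \Omega) + \int_{\widetilde\Omega\cap\partial\Omega}\omega(0, d\zeta, \Omega)\,\omega(\zeta, \widetilde A_t, \widetilde\Omega),
\]
coupled with a half-measure bound $\omega(\zeta, \widetilde A_t, \widetilde\Omega) \leq \tfrac12$ for $\zeta \in \widetilde\Omega\cap\partial\Omega$ on the ``far'' side of the geodesic $\widetilde\gamma$. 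This half-measure bound is exactly what Lemma~\ref{RL}(b) provides (via Solynin's polarization), since $\widetilde A_t$ is localized on one of the two sides of $\widetilde\gamma$ near $\widetilde P_\infty$; Lemma~\ref{PL3} supplies the complementary splitting identity at the projection point. Together these inputs produce the required comparison between $\omega(0, \hat h(A_{\delta(t)}), \Omega)$ and $\omega(0, \widetilde A_t, \widetilde\Omega)$, tracking the constants back through the Cayley formula to yield the factor $4$ on the Euclidean side.

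The principal obstacle is the precise matching of the arcs $\hat h(A_{\delta(t)}) \subset \partial_C\Omega$ and $\widetilde A_t \subset \partial_C\widetilde\Omega$ near the common prime end at infinity, so that the Markov decomposition and the polarization bound apply simultaneously with the sharp constant. This requires a careful geometric description of how $\partial\Omega$ sits inside $\widetilde\Omega$ near $P_\infty$; convexity of both domains in the positive direction is the structural feature which should make the reflection hypothesis of Solynin's polarization applicable. Once the arc-to-arc comparison is established with constants uniform in $t$, passing to $\liminf$ as $t\to\infty$ (and controlling the $o(1)$ errors from the asymptotic matching of $|1-\phi_t(0)|$ with its harmonic-measure proxy) yields the conclusion.
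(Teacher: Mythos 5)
Your opening reduction is correct and, up to an algebraic rephrasing, is exactly where the paper ends up: with $\tau=\widetilde\tau=1$, $\psi(z)=(1+z)/(1-z)$ and $\pi_t=\pi(\phi_t(0))$ one has $e^{2v^o(t)}=(1+\pi_t)/(1-\pi_t)=|\psi(\phi_t(0))|$, and since $|1+\phi_t(0)|,|1+\widetilde\phi_t(0)|\to 2$ the claim is equivalent to $\limsup_{t\to\infty}|1-\phi_t(0)|/|1-\widetilde\phi_t(0)|\le 4$, or equivalently to the paper's own \eqref{ls6}, $\liminf(1-\widetilde\pi_t^2)/(1-\pi_t^2)\ge 1/4$.

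The gap is in the machinery you propose for establishing that ratio, and it is a real one. Your harmonic-measure setup is circular: if $A_{\delta(t)}$ is an arc of $\partial\D$ of half-width $\delta(t)\asymp|1-\phi_t(0)|$, then by conformal invariance $\omega(0,\hat h(A_{\delta(t)}),\Omega)=\omega(0,A_{\delta(t)},\D)=\delta(t)/\pi\asymp|1-\phi_t(0)|$ \emph{by construction}, and similarly on the $\widetilde\Omega$ side. Comparing these two harmonic measures is therefore the very statement you are trying to prove; no information has been extracted. More importantly, the two prime-end arcs $\hat h(A_{\delta(t)})$ and $\widehat{\widetilde h}(\widetilde A_{\delta(t)})$ live in \emph{different} boundaries with no common geometric object linking them, so neither the strong Markov property nor a polarization estimate has a concrete domain pair to act on. You also misattribute the lemmas: Lemma~\ref{RL}(b) (Solynin polarization) and Lemma~\ref{PL3} are used in the proof of Theorem~\ref{Th1}, not here.

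What actually makes the comparison work in the paper is a different choice of harmonic-measure object: the common slit. For $t>0$ one takes $A_t=\{z\in\D:h(z)\in[t,\infty)\}$ and $\widetilde A_t=\{z\in\D:\widetilde h(z)\in[t,\infty)\}$, i.e.\ the preimages of the \emph{same} ray $[t,\infty)$, which lies in both Koenigs domains. Conformal invariance and the inclusion $\Omega\subset\widetilde\Omega$ then give, by ordinary domain monotonicity, $\omega(0,A_t,\D\setminus A_t)\le\omega(0,\widetilde A_t,\D\setminus\widetilde A_t)$ --- this one inequality is the entire use of $\Omega\subset\widetilde\Omega$. The two sides are then estimated purely inside $\D$: for the upper bound, Theorem~\ref{Th1} is invoked to show that $\widetilde A_t$ lies in the region beyond the geodesic $\widetilde\Gamma_t$ perpendicular to $(0,1)$ at $\widetilde\pi_t$, yielding $\omega(0,\widetilde A_t,\D\setminus\widetilde A_t)\le\frac{2}{\pi}\tan^{-1}\bigl(\frac{1-\widetilde\pi_t^2}{2\widetilde\pi_t}\bigr)$; for the lower bound, a projection theorem for harmonic measure gives $\omega(0,A_t,\D\setminus A_t)\ge\frac{1}{2\pi}\tan^{-1}\bigl(\frac{1-\pi_t^2}{2\pi_t}\bigr)$. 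The factor $4$ (hence $\log 2$) comes from the product of the $2$ in the upper bound and the $\frac12$ in the lower bound, and letting $t\to\infty$ gives \eqref{ls6}. So the ingredients you would need are: the common slit as the comparison object, Theorem~\ref{Th1} to localize $\widetilde A_t$, and the projection theorem --- none of which appear in your sketch.
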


Before we proceed with the proof, we need the following observation. For convenience, we will simplify the notation $\pi(\phi_t(0))$ to $\pi_t$. Note that
\[
v^o(t)=\rho_{\D}(0,\pi_t)=\frac{1}{2}\log\frac{1+\pi_t}{1-\pi_t}.
\]
By Theorem \ref{Th1}, $v^o$ is a strictly increasing function of $t>0$ and therefore so is $\pi_t$. Let $\Gamma_t$ be the hyperbolic geodesic in $\D$ which is perpendicular to the radial segment $(0,1)$ and passes through the point $\phi_t(0)$. Let $R_t$ denote the component of $\D\setminus \Gamma_t$ with the point $1$ on its boundary. The fact that $\pi_t$ is strictly increasing implies that the arc $\{\phi_s(0),\; s>t\}$, is contained in $R_t$. 

\begin{figure}
	\includegraphics[width=0.5\linewidth]{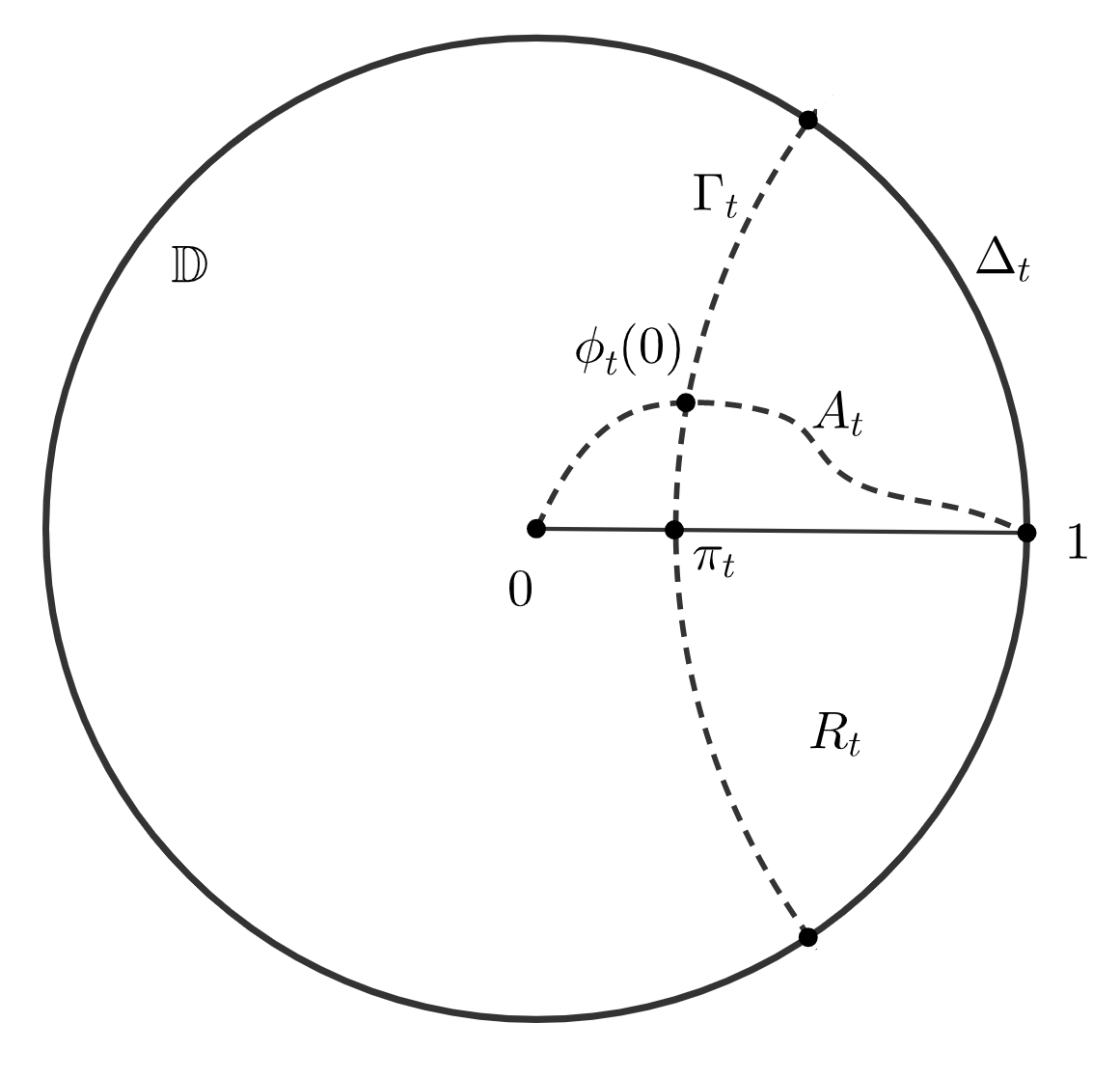}
	\caption{The circular arc $\Gamma_t$ is perpendicular to both $\partial\D$ and $(0,1)$. The arc $A_t$ is contained in $R_t$.}
	\label{Fig8}	
\end{figure}

\begin{proof}
By working with the semigroups defined by $\psi_t(z)=\overline{\tau}\phi_t(\tau z)$ and $\widetilde{\psi}_t(z)=\overline{\widetilde{\tau}}\widetilde{\phi}_t(\widetilde{\tau} z)$, if necessary, we may assume that $\tau=\widetilde{\tau}=1$. Let $h:\D\to\Omega$, $\widetilde{h}:\D\to\widetilde{\Omega}$ be the Koenigs maps for the semigroups. For $t>0$, let
\[
A_t=\{z\in\D:\ h(z)\in [t,+\infty)\},
\]
and
\[
\widetilde{A}_t=\{z\in\D:\ \widetilde{h}(z)\in [t,+\infty)\}.
\]
Note that since $t>0$, we have $0\notin A_t$ and $0\notin \widetilde{A}_t$. 
By the domain monotonicity and conformal invariance of harmonic measure,
\begin{equation}\label{ls2}
\omega\left(0,A_t,\D\setminus A_t\right)\leq \omega\left(0,\widetilde{A}_t,\D\setminus\widetilde{A}_t\right).
\end{equation}
Let $\widetilde{\Gamma}_t$ be the hyperbolic geodesic in $\D$ which is perpendicular to $(0,1)$ (at $\widetilde{\pi}_t$) and passes through the point $\widetilde{\phi}_t(0)$. Let $\widetilde{R}_t$ be the component of $\D\setminus\widetilde{\Gamma}_t$ with $1$ on its boundary, and let $\widetilde{\Delta}_t=\partial\widetilde{R}_t\setminus\widetilde{\Gamma}_t$. We also denote by $\Gamma_t,\ R_t,\ \Delta_t$ the corresponding sets for $(\phi_t)$. See Figure \ref{Fig8}. Note that $\Delta_t,\ \widetilde{\Delta}_t\subset\partial\D$. By the observation above, $\widetilde{A}_t\subset  \overline{\widetilde{R}_t}$ and thus,
by the extended maximum principle (see e.g. \cite[Theorem 3.6.9.]{Ran}),
\[
\omega\left(0,\widetilde{A}_t,\D\setminus\widetilde{A}_t\right)\leq \omega\left(0,\widetilde{\Gamma}_t,\D\setminus \overline{\widetilde{R}_t}\right).
\]
It is not hard to see, for example by transferring the situation to the upper half plane, that
\[
\omega\left(0,\widetilde{\Gamma}_t,\D\setminus \overline{\widetilde{R}_t}\right)=2\omega\left(0,\widetilde{\Delta}_t,\D\right).
\]
By a direct calculation,
\[
\omega\left(0,\widetilde{\Delta}_t,\D\right)=\frac{1}{\pi}\tan^{-1}\left(\frac{1-\widetilde{\pi}_t^2}{2\widetilde{\pi}_t}\right).
\]
We conclude that
\begin{equation}\label{ls3}
\omega\left(0,\widetilde{A}_t,\D\setminus\widetilde{A}_t\right)\leq \frac{2}{\pi}\tan^{-1}\left(\frac{1-\widetilde{\pi}_t^2}{2\widetilde{\pi}_t}\right).
\end{equation}

We now estimate $\omega\left(0,A_t,\D\setminus A_t\right)$ from below. By a projection theorem for the harmonic measure (see \cite[Theorem 7.2.13]{BCD} and references therein), and the observation preceding the proof,
\[
\omega\left(0,A_t,\D\setminus A_t\right)\geq \omega\left(0, \Delta_t\cap\Ha_{-},\D\right).
\]
Therefore,
\begin{equation}\label{ls4}
\omega\left(0,A_t,\D\setminus A_t\right)\geq \frac{1}{2\pi}\tan^{-1}\left(\frac{1-\pi_t^2}{2\pi_t}\right).
\end{equation}
By \eqref{ls2}, \eqref{ls3}, and \eqref{ls4},
\begin{equation}\label{ls5}
\tan^{-1}\left(\frac{1-\pi_t^2}{2\pi_t}\right)\leq 4\tan^{-1}\left(\frac{1-\widetilde{\pi}_t^2}{2\widetilde{\pi}_t}\right).
\end{equation}
Set $\frac{1-\pi_t^2}{2\pi_t}=x_t$ and $\frac{1-\widetilde{\pi}_t^2}{2\widetilde{\pi}_t}=\widetilde{x}_t$. Then we may write \eqref{ls5} as
\[
\frac{\tan^{-1}x_t}{x_t}\leq 4\frac{\widetilde{x}_t}{x_t}\frac{\tan^{-1}\widetilde{x}_t}{\widetilde{x}_t}.
\]
Observe that $\pi_t,\ \widetilde{\pi}_t\to 1$ and thus $x_t,\ \widetilde{x}_t\to 0$, as $t\to +\infty$. It follows that
\[
\liminf_{t\to\infty}\frac{\widetilde{x}_t}{x_t}\geq \frac{1}{4},
\]
which is equivalent to
\begin{equation}\label{ls6}
\liminf_{t\to\infty}\frac{1-\widetilde{\pi}_t^2}{1-\pi_t^2}\geq\frac{1}{4}.
\end{equation}
Finally, in view of the identity
\[
v^o(t)-\widetilde{v}^o(t)=\frac{1}{2}\log\left[ \frac{1-\widetilde{\pi}_t^2}{1-\pi_t^2}\left(\frac{1+\pi_t}{1+\widetilde{\pi}_t}\right)^2\right],
\]
\eqref{ls6} implies \eqref{ls1}.
\end{proof}


\bigskip

{\bf Acknowledgements:} We thank Dmitry Yakubovich, Davide Cordella, and the referees for their remarks and corrections.


\begin{bibdiv}
\begin{biblist}

\bibitem{BM} A. F. Beardon and D. Minda, {\sl The hyperbolic metric and geometric function theory}, Proceedings of the International Workshop on Quasiconformal mappings and their applications, 9--56 (2007)

\bibitem{Bets} D. Betsakos, {\sl On the existence of strips inside 
domains convex in one direction}, J. Anal. Math. 134 no. 1, 107--126 (2018)

\bibitem{Bra} F. Bracci, {\sl Speeds of convergence of orbits of 
non-elliptic semigroups of holomorphic self-maps of the unit disk}, Ann. Univ. Mariae Curie-Skłodowska, Sect. A 73, no. 2, 21--43 (2019)

\bib{BCD}{book}{
	title={Continuous Semigroups of Holomorphic Self-maps of the Unit Disc},
	author={F. Bracci and M. D. Contreras and S. Díaz-Madrigal},
	date={2020},
	publisher={Springer Monographs in Mathematics. Springer},
	address={}
}

\bibitem{BCK} F. Bracci, D. Cordella and M. Kourou, {\sl Asymptotic 
monotonicity of the orthogonal speed and rate of convergence for 
semigroups of holomorphic self-maps of the unit disc}, Rev. Mat. 
Iberoam. 38, no. 2, 527--546 (2022)

\bibitem{Doo} J.L. Doob,\,
{\sl Classical Potential Theory and Its Probabilistic
	Counterpart}. Springer-Verlag 1984.

\bibitem{GT} D. Gilbarg,  N.S. Trudinger, 
\,{\sl  Elliptic partial differential equations of second order.} Reprint of the 1998 edition. Classics in Mathematics. Springer-Verlag, Berlin, 2001. xiv+517 pp.

\bibitem{Jorg} V. J\o rgensen, {\sl On an inequality for the hyperbolic measure and its applications in the theory of functions}, Math. Scand. 4, 113--124 (1956)

\bib{Ran}{book}{
	title={Potential Theory in the Complex Plane},
	author={T. Ransford},
	date={1995},
	publisher={London Mathematical Society Student Texts, Cambridge University Press},
	address={Cambridge}
}

\bibitem{Sol}  %
A. Yu. Solynin, {\sl Polarization and functional inequalities.}
Algebra i Analiz 8, 148--185 (1996); English transl., St.
Petersburg Math. J. 8,  1015--1038 (1997).


\end{biblist}
\end{bibdiv}

\end{document}